% !TEX encoding = UTF-8 Unicode
\documentclass[letterpaper,11pt]{article}
\usepackage[utf8]{inputenc}
\usepackage[english]{babel}

\usepackage{algorithm}

\usepackage{float}
\usepackage{algpseudocode}
\usepackage{placeins}

\usepackage{latexsym}
\usepackage{amssymb}
\usepackage{amsthm}
\usepackage{mathrsfs}
\usepackage{amstext}
\usepackage{graphicx} 
\usepackage{amsfonts}
\usepackage{wrapfig}
\usepackage{sidecap} 
\usepackage{latexsym}
\usepackage{pdfpages}
\usepackage{epic}
\usepackage{fullpage}
\usepackage{color}
\usepackage{curves}
\usepackage{subfigure}
\usepackage{setspace}
\usepackage{amsmath}
\numberwithin{equation}{section}
\usepackage{anysize}
\usepackage{rotating}
\usepackage{enumerate} 
\usepackage{hyperref}
\usepackage{bbm}

\usepackage{graphicx}
\usepackage{subfigure}

%%%%%%%%%
% Tikz

%-------------------------------------
\newtheorem{theorem}{Theorem}[section] 
\newtheorem{definition}[theorem]{Definition}

\newtheorem{proposition}[theorem]{Proposition}
\newtheorem{lemma}[theorem]{Lemma}
\newtheorem{remark}[theorem]{Remark}
%
% Appendix
%

%\usepackage[title,titletoc,toc]{appendix}
%\usepackage{titlesec}
%\titleformat{\paragraph}[runin]{\normalfont\normalsize\itshape}{\theparagraph}{}{}[.] %format paragraph italic and ser a period after it
%\titlespacing{\paragraph}{0pt}{0pt}{*1} %remove spacing and add one characterspace after paragraph

% New Commands
\def\R{{\mathbb R}}

%-------------------------------------

%-------------------------------------

\renewcommand{\leq}{\leqslant}
\renewcommand{\geq}{\geqslant}
%\renewcommand\i{{\bf i}}
%-------------------------------------
%-------------------------------------
\numberwithin{equation}{section}

%-------------------------------------

% Spatial derivatives
\newcommand{\px}{\partial_{x}}

\newcommand{\pxx}{\partial_x ^2}
\newcommand{\pxxx}{\partial_x ^3}

% Time derivatives
\newcommand{\pt}{\partial_t}
\newcommand{\ptt}{\partial_t^2}

\newcommand{\pnu}{\partial_\nu}

% Integral commands

\newcommand{\IOT}[1]{\int_{0}^T\int_\Omega #1 \,dx\,dt} 
\newcommand{\IGT}[1]{\int_{0}^T\int_{\Gamma_1} #1 \,dS\, dt}

  % gamma star
\newcommand{\IOTT}[1]{\int_{-T}^T \int_{\Omega} #1 \,dx\,dt}
\newcommand{\IGTT}[1]{\int_{-T}^{T}\int_{\Gamma_1} #1 \,dS\, dt}
%%%%%%%%%%%%%%%%%%%%
% Variables

%%%%%%%%%%%%%%%%%%%%%%%%%%%%%%%%%%%%%%%%%%%%%%%%%%%%%%%%%%%%%%%%%%%%%%%%%%%%%%%%%%%%%%%%%%%%%%

%%
\definecolor{darkgreen}{rgb}{0.0, 0.5, 0.0} % Custom dark green

\title{Simultaneous reconstruction of two  potentials for a nonconservative Schr\"odinger equation with dynamic boundary conditions}
\author{
	Hugo Carrillo\thanks{School of Electrical Engineering, Pontificia Universidad Cat\'olica de Valpara\'iso, Valpara\'iso, Chile e-mail: hugo.carrillo@pucv.cl}
	\and
	Alberto Mercado\thanks{Departamento de Matem\'atica, Universidad T\'ecnica Federico Santa Mar\'{\i}a, Casilla 110-V, Valpara\'{\i}so, Chile e-mail: alberto.mercado@usm.cl}
	\and 
	Roberto Morales\thanks{IMUS, Universidad de Sevilla, Apartamento 1160, 41080 Sevilla, Spain  e-mail: rmorales1@us.es}
	}

\date{\today}
\begin{document}
\maketitle
\begin{abstract} 
	In this article, we consider an inverse problem involving the simultaneous reconstruction of two real valued potentials for a Sch\"odinger equation with mixed boundary conditions: a dynamic boundary condition of Wentzell type and a Dirichlet boundary condition. The main result of this paper is a Lipschitz stability estimate for such potentials from a single measurement of the flux. This result is deduced using the Bukhgeim-Klibanov method and a suitable Carleman estimate where the weight function depends on Minkowski's functional. %is constructed specially adapted to the geometry of the domain. In addition, we give a reconstruction algorithm for such potentials based on the Carleman estimate previously obtained. We also prove that this algorithm converges. Finally, numerical experiments illustrates the efficiency of the proposed algorithm.
\end{abstract}

\noindent {\bf Keywords:} Schr\"odinger equation, inverse problem, dynamic boundary conditions, Carleman estimates.

\noindent {\bf MSC (2020):} 35J10; 35R25; 35R30; 49M41.

\tableofcontents

\section{Introduction}
	Let $\Omega\subset \mathbb{R}^n$, $n\geq 2$, be a bounded domain with smooth boundary $ \partial \Omega$. 
    We assume that $\partial \Omega=\Gamma_0\cup \Gamma_1$ with $\Gamma_0$ and $\Gamma_1$ are two closed subsets satisfying $\Gamma_0\cap \Gamma_1=\emptyset$. 
    Unless explicitly stated otherwise, all of the function spaces discussed in this paper will concern complex-valued functions.
    %We assume that $\partial \Omega$ is composed by two non-empty closed subsets $\Gamma_0$ and $\Gamma_1$ such that $\Gamma_0\cap \Gamma_1=\emptyset$. In this paper, the function spaces refer to complex valued functions unless otherwise stated. 
 
 We introduce the operators $L$ and $L_\Gamma$ given by
 \begin{align}
    \label{def:operator:L}
     L(y):=\,&i\pt y+d\Delta y -\vec{p_1}\cdot \nabla y\quad  \text{ in }\Omega\times (0,T)
\end{align}
and
\begin{align}
    \label{def:operator:L:gamma}
     L_\Gamma (y,y_\Gamma):=\,&i\pt y_\Gamma -d\pnu y+\delta \Delta_\Gamma y_\Gamma -\vec{p}_{\Gamma,1}\cdot \nabla_\Gamma y_\Gamma\quad  \text{ on }\Gamma_1\times (0,T),
 \end{align}
 where $i$ denotes the imaginary unit, $d,\delta>0$, $\vec{p}_1$ and $\vec{p}_{\Gamma,1}$ are vector valued functions defined in $\Omega$ and on $\Gamma_1$, respectively. Moreover,  $\pnu$ denotes the normal derivative associated to the outward normal $\nu$ of $\Omega$, $\nabla_\Gamma$ is the tangential gradient and $\Delta_\Gamma$ is the Laplace Beltrami operator. 
 
 In this article, we will consider the following nonconservative Schr\"odinger equation with dynamic boundary conditions:
	\begin{align}
	\label{original:problem:01}
	\begin{cases}
	L(y) + p(x)y=g&\text{ in }\Omega\times (0,T),\\
	L_\Gamma(y,y_\Gamma)+p_{\Gamma}(x)y_\Gamma =g_\Gamma&\text{ on }\Gamma_1\times (0,T),\\
	y\big|{_{\Gamma_1}} =y_\Gamma&\text{ on }\Gamma_1\times (0,T),\\
	y\big|_{\Gamma_0}=0&\text{ on }\Gamma_0\times (0,T),\\
	(y(\cdot,0),y_\Gamma(\cdot,0))=(y_0,y_{\Gamma,0})&\text{ in }\Omega\times \Gamma_1.
	\end{cases}
	\end{align} 
where $(g,g_\Gamma)$ is the source term, $p$ and $p_{\Gamma}$ are the potentials and $(y_0,y_{\Gamma,0})$ is the initial data.

We recall that, under the following assumptions on $(p,p_\Gamma) \in L^\infty(\Omega;\R)\times L^\infty(\Gamma_1;\R)$, $(y_0,y_{\Gamma,0})\in L^2(\Omega)\times L^2(\Gamma_1)$, $(g,g_\Gamma)\in L^1(0,T;L^2(\Omega)\times L^2(\Gamma_1))$ and suitable assumptions on $(\vec{p}_1,\vec{p}_{\Gamma,1})\in [L^\infty(\Omega)]^n \times [L^\infty(\Gamma_1)]^n$ (see Section \ref{section:preliminaries}) the problem \eqref{original:problem:01} has a unique weak solution. Moreover, there exists a positive constant $C=C(\Omega,T,p,p_\Gamma,\vec{p}_1,\vec{p}_{\Gamma,1})$ such that the solution $(y,y_\Gamma)$ of \eqref{original:problem:01} satisfies 
\begin{align*}
\|(y,y_\Gamma)\|_{C^0([0,T];L^2(\Omega)\times L^2(\Gamma_1))}\leq C\|(g,g_\Gamma)\|_{L^1(0,T;L^2(\Omega)\times L^2(\Gamma_1))}+C\|(y_0,y_{\Gamma,0})\|_{L^2(\Omega)\times L^2(\Gamma_1)}.
\end{align*}

In this paper, we are interested in an \textit{coefficient inverse  problem} associated to the Schr\"odinger equation with dynamic boundary conditions. More precisely, we consider the following:

{\bf Coefficient Inverse Problem (CIP):} Is it possible to retrieve $(p,p_{\Gamma}) \in L^\infty(\Omega;\R)\times L^\infty(\Gamma_1;\R) $ from a measurement of the normal derivative $\pnu y$ on $\Gamma_\star\times (0,T)$ ($\Gamma_\star\subseteq \Gamma_0$), where $(y,y_\Gamma)$ is the solution of \eqref{original:problem:01} associated to $(p,p_{\Gamma})$?
	
	We point out that our goal is the study of dependence of solutions $(y,y_\Gamma)$ of system \eqref{original:problem:01} with respect to the potentials $p$ and $p_{\Gamma}$. For this reason, we ignore for instance the dependence of the solutions of \eqref{original:problem:01} with respecto to the initial conditions and source terms. Besides, to emphasize this fact, sometimes we shall write 
    $$y= y[p,p_{\Gamma}] \quad \text{and} \quad  y_\Gamma = y_\Gamma[p,p_{\Gamma}].$$
    
	In this direction, the following  questions
     naturally arise:
	\begin{itemize}
		\item \textbf{Uniqueness:} Does the inequality $\pnu y[p,p_{\Gamma}]=\pnu y[q,q_{\Gamma}]$ on $\Gamma_\star\times (0,T)$ imply $p=q$ in $\Omega$ and $p_{\Gamma}=q_{\Gamma}$ on $\Gamma_1$?
		\item \textbf{Stability:} Is it possible to estimate $\|q-p\|_{L^2(\Omega)}$ and $\|q_{\Gamma} -p_{\Gamma}\|_{L^2(\Gamma_1)}$, by a suitable norm of $(\pnu y[q,q_{\Gamma}] - \pnu y[p,p_{\Gamma}])$ on $\Gamma_\star\times (0,T)$?
		\item \textbf{Reconstruction formula:} Can we find an algorithm to compute the potentials $p$ and $p_{\Gamma}$ by partial knowledge of $\pnu y[p,p_{\Gamma}]$ on $\Gamma_\star\times (0,T)$?
	\end{itemize}

In this article, we focus on the stability and reconstruction aspects of {\bf (CIP)}, properties derived under specific assumptions. Naturally, the stability result also ensures uniqueness.
%In this article, we will limit our attention to the stability and reconstruction issues of {\bf (CIP)}. Of course,  the stability result implies  uniqueness. 
%one under additional conditions on the coefficients of \eqref{original:problem:01} and the shape of the domain $\Omega$.
 
We also consider the same inverse problem for a one-dimensional version of \eqref{original:problem:01}, which reads as follows.
\begin{align}
    \label{1D:original:problem}
    \begin{cases}
        i\pt y+d\pxx y-p_1(x)\px y+p(x)y=g(x,t) &\forall\, (x,t)\in (0,\ell)\times (0,T),\\
        i\dot{y}_{\Gamma}(t)-d \px y(0,t)+p_\Gamma y_{\Gamma}(t)=g_\Gamma(t) &\forall\, t\in (0,T),\\
        y(0,t)=y_{\Gamma}(t) &\forall\, t\in (0,T),\\
        y(\ell,t)=0 &\forall\, t\in (0,T),\\
        y(x,0)=y_0(x),\, y_\Gamma(0)=y_{\Gamma,0} &\forall\, x\in (0,\ell),
    \end{cases}
\end{align}
where $d>0$, $p_1\in L^\infty(\Omega)$, $(p,p_\Gamma)\in L^\infty(\Omega;\mathbb{R})\times \mathbb{R}$ and $(y_0,y_\Gamma)\in L^2(\Omega)\times \mathbb{R}$. Here, the pair $(y,y_{\Gamma_0})\in L^2(0,T;L^2(\Omega)\times \mathbb{R})$ stands for the state of the system \eqref{1D:original:problem}.

Our work is organized into two main parts. First, we derive a new Carleman estimate for the Schr\"odinger  operator with dynamic boundary conditions and use these estimates to apply the Bukhgeim-Klibanov method. 
Second, based on these results, we develop a constructive and iterative algorithm to determine the coefficients
$(p_0,p_{\Gamma,0})$ 
 using specific additional data. 
 This involves analyzing an appropriate functional derived from a data assimilation problem 
 and showing the existence of a unique minimizer in a suitable space, 
 from where we obtain the convergence of the iterative algorithm, which is  adapted from the Carleman-based approach introduced in
  \cite{Baudouin2013Global}.  
\subsection{Related references}
The first application of Carleman estimates  in the context of inverse problems is due to Bukhgeim and Klibanov in 1981 in \cite{bukhgeim1981global}. In this article, the authors proved H\"older stability results using a local Carleman estimate for complactly supported functions. After that, this method was slightly modified by Puel and Yamamoto in \cite{Puel1996On} by using a global Carleman estimate for the wave equation, allowing to obtain Lipschitz stability results for the Inverse source problem. See also \cite{Imanuvilov2001Global}, \cite{Imanuvilov2003Determination}, \cite{Klibanov2006Lipschitz} and \cite{Carreno2018Potential} for some related inverse problems for the wave equation and systems. 

The Carleman-based reconstruction algorithm (CbRec for short) was introduced in \cite{Baudouin2013Global} to study the reconstruction of the time independent potential of the wave equation posed in a bounded domain with Dirichlet boundary condition, with measurements on the flux of the solution in an appropriate subset of the boundary. This approach is inspired in the ideas given in \cite{Klibanov1995Uniform} and \cite{Klibanov1997Global} under additional assumptions. In \cite{Baudouin2013Global}, It is proved that this algorithm globally converges to the exact solution of the inverse problem for the wave equation, i.e., it converges to the unknown potential independently of the initial guest of the algorithm.

Unfortunately, the Carleman weights of the wave equation involves two exponential functions, which generates drawbacks in numerical simulations. In fact, in \cite{Cindea2013Numerical} the authors 
reconstructed numerically a control for the wave equation with a quadratic functional involving exponential weights with small values of the Carleman parameters, i.e., $s\approx 1$ and $\lambda \approx 0.1$. To avoid this problem, in \cite{Baudouin2017Convergent} the same authors 
studied the same problem, but this time a Carleman estimate was obtained by using a one parameter weight function. This allows to use the CbRec algorithm to reconstruction the unknown potential but the price to pay is additional observations are needed. These ideas have been studied to recover the speed coefficient of the wave in \cite{Baudouin2021Carleman} for the wave equation and in \cite{Boulakia2021Numerical} to reconstruct a spatial part of the source term in a reaction-diffusion equation. More recently, in \cite{baudouin2023Carleman} the CbRec algorithm has been implemented to address an inverse problem for the wave equation in a tree shaped network.

Concerning inverse problems for the Schr\"odinger equation, we mention the work \cite{Baudouin2002Uniqueness}, where the authors consider a coefficient inverse problem of recovering the zeroth order potential of the Schr\"odinger equation $i\pt v-\nabla\cdot(a(x)\nabla v)+p(x)v$ (with $a(x)\equiv 1$) with Dirichlet boundary conditions . Here, a Carleman estimate for the Schr\"odinger operator is proved using geometric conditions. Moreover, using the Bukhgeim-Klibanov method, a Lipschitz stability estimate is obtained. After that, in \cite{Baudouin2008AnInverse} consider a similar inverse problem but with $a(x)$ being a discontinuous continuous function. Once again, with a new Carleman estimate at hand, Lipschitz stability results have been obtained for such inverse problem. More recently, in \cite{Imanuvilov2023Sharp} an inverse source problem for general Schr\"odinger equation is studied. In that article, the authors prove stability results when no geometric conditions are assumed on the domain. The results relies on a different approach which depends on a transformation of the Schr\"odinger equation to an elliptic one. The key point here is such transformation is defined by a solution to a controllability problem for the one-dimensional Schr\"odinger equation. On the other hand, we refer to \cite{Lasiecka2004Globalp1}, \cite{Lasiecka2004Globalp2} and \cite{Triggiani2007Pointwise} on the study of well-posedness, stability and pointwise Carleman estimates for nonconservative Schr\"odinger equations with different boundary conditions. 

Only a few papers have been devoted to the study of inverse problems for PDEs with dynamic boundary conditions. In \cite{Chorfi2023Numerical}, the authors studies the identification of initial data for the heat equation with dynamic boundary condition. Recently, in \cite{chorfi2024lipschitz} the authors studied controllability issues and Inverse problem for the wave equation with kinetic boundary conditions. The findings of this article improve those obtained in \cite{Gal2017Carleman} with an additional geometric assumption. The key point used there is a new Carleman estimate for the wave operator with kinetic boundary conditions, where the associated weight function depends on the Minkowski functional, similar to those used in \cite{Mercado2023Exact}. More recently, in \cite{chorfi2024identification} the authors studied an inverse problem where the goal is to identity two spatial-temporal source terms for the Schr\"odinger equation with dynamic boundary conditions from final time measurements. To deal with it, the authors adopt a Tikhonov regularization strategy and analyze the properties of such functional. In particular, the existence and uniqueness of the solutions is investigated. In addition, some numerical experiments are given in one-dimensional case.

To the best of our knowledge, this is the first time that {\bf (CIP)} is considered for a non conservative Schr\"odinger equation with dynamic boundary conditions.

\subsection{Setting} 	
	In this section, we set up the notation and terminology used in this paper. Firstly, we consider the set $\Gamma_1\subset \partial \Omega$ as an $(n-1)$-dimensional compact Riemannian submanifold equipped by the Riemannian metric $g$ induced by the natural embedding $\Gamma_1 \subset \mathbb{R}^n$. We point out that it is possible to define the differential operators on $\Gamma_1$ in terms of the Riemannian metric $g$. However, for the purposes of this article, it will be enough to use the most important properties of the underlying operators and spaces. The details can be found, for instance, in \cite{Jost} and \cite{Taylor}. For the sake of completeness, we recall some of those properties.
	
	The tangential gradient $\Gamma_\Gamma$ of $y_\Gamma$ at each point $x\in \Gamma_1$ can be seen as the projection of the standard Euclidean gradient $\nabla y$ onto the tangent space of $\Gamma_1$, where $y_\Gamma$ is the trace of $y$ on $\Gamma_1$, i.e., 
	\begin{align*}
	\nabla_\Gamma y_\Gamma =\nabla y-\nu \pnu y,
	\end{align*}
	where $y=y_\Gamma$ on $\Gamma_1$ and $\pnu y$ is the normal derivative associated to the outward normal $\nu$. In this way, the tangential divergence $\text{div}_\Gamma$ in $\Gamma_1$ is given by
	\begin{align*}
	\text{div}_\Gamma (F_\Gamma):H^1(\Gamma_1)\to \mathbb{R},\quad y_\Gamma \mapsto -\int_{\Gamma_1} F_\Gamma \cdot \nabla_\Gamma y_\Gamma\, dS.
	\end{align*}
	Moreover, $\Delta_\Gamma$ denotes the Laplace-Beltrami operator, 
    which satisfies  $\Delta_\Gamma y_\Gamma=\text{div}_\Gamma (\nabla_\Gamma y_\Gamma)$ for all $y_\Gamma\in H^2(\Gamma_1)$. In particular, the surface divergence theorem holds:
	\begin{align}
    \label{surface:divergence:theorem}
	\int_{\Gamma_1} \Delta_\Gamma yz \,dS=-\int_{\Gamma_1} \nabla_\Gamma y \cdot \nabla_\Gamma z \,dS,\quad \forall y\in H^2(\Gamma_1),\quad \forall z\in H^1(\Gamma_1).
	\end{align}

    For $1\leq r\leq +\infty$, we define the Banach spaces $\mathbb{L}^r:=L^r (\Omega)\times L^r (\Gamma_1)$ endowed by their natural norms. In particular, the space $\mathbb{L}^2$ is a the Hilbert space endowed by the scalar product
	\begin{align*}
	\langle (u,u_\Gamma),(v,v_\Gamma) \rangle_{\mathbb{L}^2}:=\int_\Omega u\overline{v} \,dx + \int_{\Gamma_1} u_\Gamma \overline{v_\Gamma}\, dS.
	\end{align*}
	
	Moreover, for $m\in \mathbb{N}$, we consider the space
	\begin{align*}
	H_{\Gamma_0}^m(\Omega):=\{u\in H^m(\Omega)\,:\, u=0 \text{ on }\Gamma_0\},
	\end{align*}
	which is a closed subspace of the Sobolev space $H^m(\Omega)$. Moreover, for $m\in \mathbb{N}$, we set
 \begin{align*}
     \mathbb{H}_{\Gamma_0}^m:=\{(u,u_\Gamma)\in H_{\Gamma_0}^m(\Omega)\times H^m(\Gamma_1)\,:\, u=u_\Gamma\text{ on }\Gamma_1\}.
 \end{align*}
 
 We point out that, thanks to the Trace Theorem and Poincar\'e's inequality, $\mathbb{H}_{\Gamma_0}^1$ is a Hilbert space in $\mathbb{C}$ endowed by
	\begin{align*}
	\langle (u,u_\Gamma),(v,v_\Gamma) \rangle_{\mathbb{H}_{\Gamma_0}^1}:=\int_\Omega \nabla u \cdot \nabla \overline{v}\,dx +\int_{\Gamma_1} \nabla_\Gamma u_\Gamma \cdot \nabla_\Gamma \overline{v_\Gamma}\, dS, 
	\end{align*}
for all $(u,u_\Gamma),(v,v_\Gamma)\in \mathbb{H}_{\Gamma_0}^1$.	 
\subsection{Main results}
Now, we give a definition related with the geometric hypothesis we will assume for the interior boundary.
	
	\begin{definition}
		An open, bounded and convex set $U\subset \mathbb{R}^n$, is said to be \textbf{strongly convex} if $\partial U$ is of class $C^2$ and all the principal curvatures are strictly positive functions on $\partial U$.
	\end{definition}
\begin{remark}
    \label{remark:strongly:convex:set}
	We point out that a strongly convex set is geometrically strictly convex, in the sense that it has, at each of its boundary points, a supporting hyperplane with exactly one contact point.
\end{remark}
We will consider the following
\begin{itemize}
    \item[{\bf (A1)} ] {\bf (Geometric assumption)} We suppose that $\Omega$ can be written in the form $\Omega=\Omega_0\setminus \overline{\Omega_1}$, where $\Omega_1$ is strongly convex and
    $\Omega_0$ is an open set with $\overline{\Omega_1} \subset  {\Omega_0}$.
\end{itemize}

    Without loss of generality, we can  assume that $0\in \Omega_1$. Otherwise, take $x_0\in \Omega_1$ and apply a translation by $-x_0$. 
%The Figure shows a domain satisfying the assumption {\bf (A1)}:
\begin{figure}[!h]
    \centering
    \includegraphics[width=0.4\linewidth]{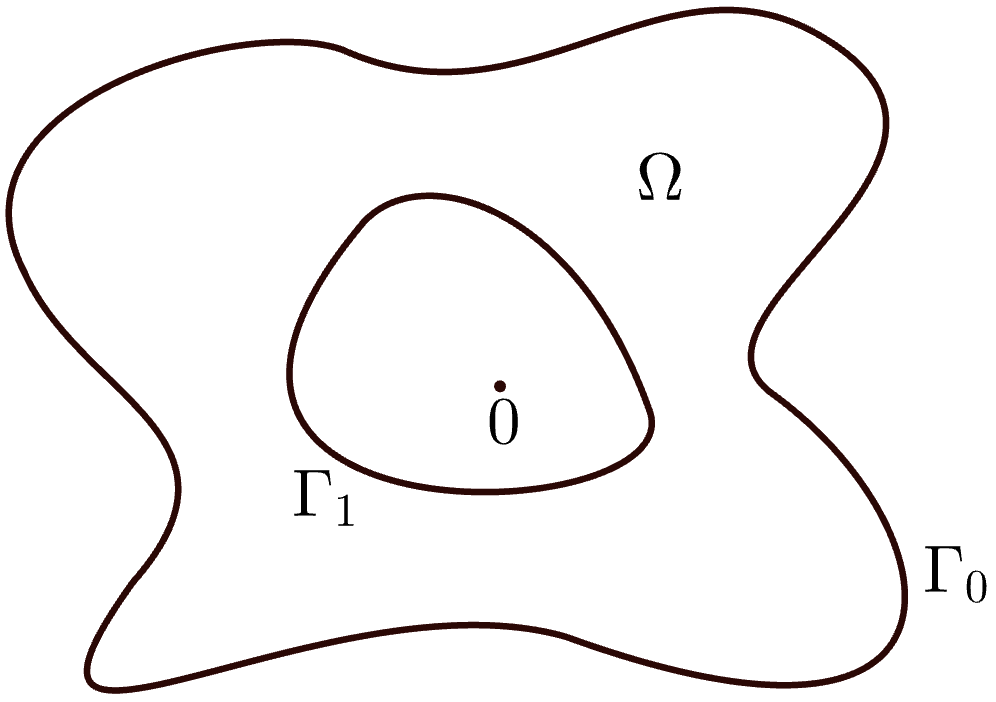}
    \caption{A domain $\Omega$ satisfying the geometric assumption {\bf (A1)}.}
    \label{fig:enter-label}
\end{figure}

From now on, we write $\Gamma_i:=\partial \Omega_i$, with $i=0,1$. We define, for each $x\in \mathbb{R}^n$,
	\begin{align}
	\label{def:psi}
	\mu(x)=\inf \{\lambda >0\,;\, x\in \lambda \Omega_1\},\quad\text{ and }\psi(x)=\mu^2(x).
	\end{align} 

%Thanks to Remark \ref{remark:strongly:convex:set}, 
Since $\Omega_1$ is open and convex, we have that 
$\mu$ and $\psi$ are well-defined and that 
%are constant on the boundary $\Gamma_1$.  
$\psi=\mu=1$ on $\Gamma_1$.
%In general the functions $\mu$ and $\psi$ are not smooth. 
In order to guarantee enough regularity for our purposes, we consider the following assumption:
\begin{itemize} 
\item[{\bf (A2)}] {\bf (Regularity of the boundary $\Gamma_1$)} We assume that the boundary $\partial \Omega_1$ has a regular parametrization, i.e., the mapping $x\in \partial \Omega_1 \mapsto \frac{x}{\|x\|}\in S^{n-1}$ is a $C^4$ diffeomorfism. 
\end{itemize}
%Indeed, thanks to the assumptions {\bf (A1)} and {\bf (A2)},
Then we have (see Proposition 3.1 of \cite{Mercado2023Exact}) that $\mu\in C^4(\overline{\Omega})$,
$\nabla \mu \neq 0$ in $\overline{\Omega}$, and there exists $c>0$ such that 
    \begin{align*}
        D^2\mu(\xi,\xi)\geq c|\xi|^2\quad \text{in} \,\,  \overline{\Omega}, \quad \forall\,  \xi \in \mathbb{R}^n.
    \end{align*}

Define $\Gamma_\star\subseteq \Gamma_0$, which is our \textit{observation region}, as
\begin{align}
\label{def:Gamma:ast}
\Gamma_\star:=\{x\in \partial \Omega\,:\, \pnu \psi(x)\geq 0\}\subseteq \Gamma_0.
\end{align}

Now, for $m>0$ and $X\subset \mathbb{R}^n$, we introduce
 \begin{align*}
 L_{\leq m}^\infty(X;\R)=\{p\in L^\infty(X;\R)\,:\, \|p\|_{L^\infty(X)}\leq m \},
 \end{align*}
and define the set of potentials as 
\begin{align}
\label{def:L:infty:m}
\mathbb{L}_{\leq m}^\infty:= L_{\leq m}^\infty(\Omega;\R)\times L_{\leq m}^\infty(\Gamma_1;\R).
\end{align}

Now, we state the main result of this article, concerning {\bf (CIP)}.
 
 \begin{theorem}
 	\label{Thm:stability:boundary}
 	Consider the assumptions {\bf (A1)} and {\bf (A2)}. Define the function $\psi$ given in \eqref{def:psi}. Moreover, for given $m>0$ we consider $(q,q_\Gamma)\in \mathbb{L}_{\leq m}^\infty$. Assume that $\delta$ and $d$ satisfy the condition 
    \begin{align}
        \label{condition:d:delta}
        \delta>d.
    \end{align}
    
    In addition, suppose that
    
\begin{align} 
\label{regularity:stability}
(y[q,q_\Gamma],y_\Gamma[q,q_\Gamma])\in H^1(0,T;\mathbb{L}^\infty) 
\end{align}
 and the initial data $y_0$ and $y_{0,\Gamma}$ are real valued functions satisfying 
 	\begin{align}
\label{condition:initial:datum:stability}
 	|y_0|\geq r_0\text{ a.e. in }\Omega\quad \text{ and } \quad |y_{0,\Gamma}|\geq r_0\text{ a.e. on } \Gamma_1,
 	\end{align}
for some positive constant $r_0$. Then,
 	there exists a constant $C=(\Omega,T,p,p_\Gamma,y_0,y_{\Gamma,0},m)>0$ such that if
 	$(\pnu y[q,q_\Gamma]-\pnu y[p,p_\Gamma])\in H^1(0,T;L^2(\Gamma_*))$, then the following inequality holds
 	\begin{align}
 	\label{Lipschitz:inequality:Sch}
 	C^{-1}\|(q,q_\Gamma)-(p,p_\Gamma)\|_{\mathbb{L}^2} \leq \|\pnu y[q,q_\Gamma]-\pnu y[p,p_\Gamma] \|_{H^1(0,T;L^2(\Gamma_*))}\leq C\|(q,q_\Gamma)-(p,p_\Gamma)\|_{\mathbb{L}^2} 
 	\end{align}
    for all $ (p,p_\Gamma)\in \mathbb{L}_{\leq m}^\infty$.
 \end{theorem}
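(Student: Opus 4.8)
The plan is to follow the classical Bukhgeim--Klibanov strategy, reducing the inverse problem for the potentials $(p,p_\Gamma)$ to an inverse source problem and then invoking a suitable Carleman estimate. First I would set $u = y[q,q_\Gamma] - y[p,p_\Gamma]$ and $u_\Gamma = y_\Gamma[q,q_\Gamma] - y_\Gamma[p,p_\Gamma]$, and write $f = q - p$, $f_\Gamma = q_\Gamma - p_\Gamma$. Subtracting the two copies of \eqref{original:problem:01}, the pair $(u,u_\Gamma)$ solves
\begin{align*}
\begin{cases}
L(u) + q\,u = f\, y[p,p_\Gamma] & \text{ in }\Omega\times(0,T),\\
L_\Gamma(u,u_\Gamma) + q_\Gamma\, u_\Gamma = f_\Gamma\, y_\Gamma[p,p_\Gamma] & \text{ on }\Gamma_1\times(0,T),\\
u\big|_{\Gamma_1} = u_\Gamma, \quad u\big|_{\Gamma_0} = 0, & \\
(u(\cdot,0),u_\Gamma(\cdot,0)) = (0,0). &
\end{cases}
\end{align*}
Because the initial data vanish, $u(\cdot,0)=0$, and from the equation at $t=0$ together with the lower bounds \eqref{condition:initial:datum:stability} on the real-valued $y_0$, $y_{\Gamma,0}$ one recovers $i\,\partial_t u(\cdot,0) = f\, y_0$, so $f$ (and similarly $f_\Gamma$) is controlled pointwise by $\partial_t u(\cdot,0)$: this is the point of requiring $|y_0|\geq r_0$. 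Thus it suffices to estimate $\partial_t u$ at $t=0$ from the boundary measurement of $\partial_\nu u$ on $\Gamma_\star\times(0,T)$.

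Next I would differentiate the system in time and work with $(v,v_\Gamma) = (\partial_t u, \partial_t u_\Gamma)$, which satisfies an analogous system with source $(\partial_t f\,y[p,p_\Gamma] + f\,\partial_t y[p,p_\Gamma], \, \text{similarly on }\Gamma_1)$ — here the regularity hypothesis \eqref{regularity:stability} is what makes the new right-hand side lie in the required space. Then I would extend the problem to the symmetric time interval $(-T,T)$ (using that the Schrödinger equation is reversible, or extending by reflection), so that $t=0$ becomes an interior time and the Carleman weight can be taken to peak at $t=0$. Applying the Carleman estimate for the operator pair $(L,L_\Gamma)$ — the new estimate announced in the paper, whose weight function is built from $\psi = \mu^2$ with $\mu$ the Minkowski functional of $\Omega_1$, valid under \textbf{(A1)}, \textbf{(A2)} and the condition $\delta > d$ in \eqref{condition:d:delta} — to $(v,v_\Gamma)$ absorbs the interior and boundary zeroth-order terms (using $(q,q_\Gamma)\in\mathbb{L}_{\leq m}^\infty$) and leaves a boundary term on $\Gamma_\star$ together with an energy term at $t=0$. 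A standard energy estimate for the Schrödinger system, combined with the Carleman inequality evaluated at $t=0$ (integrating the Carleman weight in $t$ near $0$, or using the classical trick of bounding $\|v(\cdot,0)\|^2$ via $\int_{-T}^T \partial_t(e^{2s\varphi}|v|^2)$), converts the weighted global estimate into a bound of the form $\|v(\cdot,0)\|_{\mathbb{L}^2}^2 \lesssim \|\partial_\nu u\|_{H^1(0,T;L^2(\Gamma_\star))}^2$. Since $\|v(\cdot,0)\|_{\mathbb{L}^2} = \|\partial_t u(\cdot,0)\|_{\mathbb{L}^2} \gtrsim \|(f,f_\Gamma)\|_{\mathbb{L}^2}$ by the lower bound on the initial data, this gives the left inequality in \eqref{Lipschitz:inequality:Sch}.

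The right inequality in \eqref{Lipschitz:inequality:Sch} is the easy direction: it is a direct consequence of the well-posedness and regularity estimate for \eqref{original:problem:01} applied to $(u,u_\Gamma)$ (and its time derivative), which bounds $\|\partial_\nu u\|_{H^1(0,T;L^2(\Gamma_\star))}$ by a hidden-regularity / trace estimate in terms of the source $(f\,y[p,p_\Gamma], f_\Gamma\,y_\Gamma[p,p_\Gamma])$, hence by $\|(f,f_\Gamma)\|_{\mathbb{L}^2}$ using $y[p,p_\Gamma]\in H^1(0,T;\mathbb{L}^\infty)$. I expect the main obstacle to be the Carleman estimate itself and its correct use at the dynamic boundary: one must track the boundary integrals on $\Gamma_1$ coming from integration by parts against $\Delta_\Gamma$ and $\partial_\nu$, check that the Wentzell coupling $u|_{\Gamma_1}=u_\Gamma$ together with $\delta>d$ makes the boundary quadratic form have the right sign, and verify that the only surviving uncontrolled boundary term sits on $\Gamma_\star = \{\partial_\nu\psi \geq 0\}\subseteq\Gamma_0$ where the measurement is available. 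A secondary technical point is the time-symmetrization step, ensuring the extended solution retains enough regularity for the Carleman estimate to apply and that the compatibility of the cut-off in time does not reintroduce boundary terms away from $\Gamma_\star$.
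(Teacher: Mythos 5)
Your proposal follows essentially the same Bukhgeim--Klibanov route as the paper: reduce to an inverse source problem, differentiate in time, extend by conjugate reflection to $(-T,T)$ (this is where the real-valuedness of $y_0,y_{\Gamma,0}$ is used), apply the Carleman estimate built on the Minkowski-functional weight to control the data at $t=0$, use $|y_0|,|y_{\Gamma,0}|\geq r_0$ to recover $(q-p,q_\Gamma-p_\Gamma)$, and obtain the upper bound from well-posedness/hidden regularity. Only bookkeeping differs: in the difference system the equation should carry the potential $p$ with source $(q-p)\,y[q,q_\Gamma]$ (rather than $q$ with $y[p,p_\Gamma]$), so that the regularity hypothesis \eqref{regularity:stability}, which concerns $y[q,q_\Gamma]$, is the one actually invoked, and since $f=q-p$ is time-independent the differentiated source is simply $f\,\partial_t y[q,q_\Gamma]$.
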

% In the case of internal measurements, we have the following stability result:

% \begin{corollary}
% \label{corollary:stability:internal}

% Under the same assumptions on Theorem \ref{Thm:stability:boundary}, there exists a  positive constant $C=(\Omega,T,y_0,y_{\Gamma,0})$ such that if $(\nabla y[q_0,q_{\Gamma,0}]-\nabla y[p_0,p_{\Gamma,0}])\in H^1(0,T;L^2(\omega))$, then the following inequality holds
% \begin{align}
%     \label{Lipschitz:inequality:Sch:02}
%     \|(q,q_\Gamma)-(p,p_\Gamma)\|_{\mathcal{H}} \leq C\|y[q,q_\Gamma]-y[p,p_\Gamma]\|_{H^1(0,T;H^1(\omega))}\quad \forall (q,q_\Gamma)\in L_{\leq m}^\infty(\Omega)\times L_{\leq m}^\infty(\Gamma_1).
% \end{align}
% \end{corollary}

We point out that inequality  \eqref{Lipschitz:inequality:Sch} establishes a Lipschitz stability result for the inverse problem {\bf (CIP)} for the Schr\"odinger equation with dynamic boundary conditions from (partial) boundary observations given in $\Gamma_\star\subset \partial \Omega$.
\begin{remark}
Before going further, some remarks are in order:
\begin{itemize}
    \item The assumptions {\bf (A1)} and {\bf (A2)} are imposed in order to use one of the main ingredients in the proof of the Theorem \ref{Thm:stability:boundary}: a suitable Carleman estimate for the Schr\"odinger operator with dynamic boundary conditions (see \cite{Mercado2023Exact}).
    \item The condition \ref{condition:d:delta} also appears in the deduction of observability inequalities for the wave equation with acoustic boundary conditions where the observation is only in a portion of the domain where Dirichlet boundary condition is imposed. We refer to the reader to \cite{Baudouin2022A} and \cite{chorfi2024lipschitz}. 
    \item The positivity assumption on the initial data \eqref{condition:initial:datum:stability} required in order to apply the Bukhgeim-Klibanov method and Carleman estimates for inverse problems with only one boundary measurement; see for instance \cite{Yamamoto1999Uniqueness}, \cite{Baudouin2002Uniqueness} and \cite{Imanuvilov2023Sharp}.
\end{itemize}      
\end{remark}

\begin{remark}
Hypothesis \eqref{regularity:stability}
 is a typical technical condition for 
 obtaining stability in a one-measurement inverse problem. It can be satisfied by ensuring the system has sufficiently regular data.
    %\RM{AGREGAR UN REMARK SOBRE C\'OMO SE PUEDE OBTENER LA CONDICI\'ON (1.9)}\AMS{ Mayor regularidad en los datos??}
\end{remark}

We can also formulate a stability result for the one-dimensional model \eqref{1D:original:problem}. To do this, for $1\leq r\leq \infty$, we define the spaces
\begin{align*}
    \mathcal{X}^r:=L^r(\Omega;\mathbb{R})\times \mathbb{R},
\end{align*}
endowed by its natural norm and for $m>0$, we also set the subspace 
\begin{align*}
    \mathcal{X}_{\leq m}^r:=\{(w,w_\Gamma)\in \mathcal{X}^m\,:\, \|(w,w_\Gamma)\|_{\mathcal{X}^r}\leq m \}.
\end{align*}

Then, we have the following result:

\begin{theorem}
    Given $m>0$, set $(p,p_\Gamma), (q,q_\Gamma)\in \mathcal{X}_{\leq m}^\infty$. Suppose that $(q,q_\Gamma)$, $p_1$, $(g,g_\Gamma)$ and $(y^0,y_\Gamma^0)$ are chosen such that 
    \begin{align*}
        (y[q,q_\Gamma],y_\Gamma[q,q_\Gamma])\in H^1(0,T;\mathbb{L}^\infty),
    \end{align*}
    where $y^0$ and $y_\Gamma^0$ are real valued functions which also satisfy
    \begin{align*}
        |y^0|\geq r_0\text{ a.e.  in }\Omega\text{ and }|y^0_\Gamma|>r_0,
    \end{align*}
    for some constant $r_0>0$. Then, there exists a constant $C=C(\Omega,T,p,p_\Gamma,y^0,y_{\Gamma}^0,m)>0$ such that if $(\px y[q,q_\Gamma](\ell,\cdot)-y[p,p_\Gamma](\ell,\cdot))\in H^1(0,T)$, then the following inequality holds 
    \begin{align*}
        C^{-1}\|(q,q_\Gamma)-(p,p_\Gamma)\|_{\mathcal{X}^2}\leq \|\px y[p,p_\Gamma](\ell,\cdot)-\px y[q,q_\Gamma](\ell,\cdot)\|_{H^1(0,T)}\leq C\|(q,q_\Gamma)-(p,p_\Gamma)\|_{\mathcal{X}^2},
    \end{align*}
    for all $(q,q_\Gamma)\in \mathcal{X}_{\leq m}^\infty$.
\end{theorem}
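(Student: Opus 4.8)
The plan is to reduce the one-dimensional stability theorem to the multidimensional Theorem~\ref{Thm:stability:boundary} by verifying that the 1D model \eqref{1D:original:problem} is a special case of \eqref{original:problem:01} with $n=1$, $\Omega=(0,\ell)$, $\Gamma_1=\{0\}$ and $\Gamma_0=\{\ell\}$. In this degenerate situation the "surface manifold" $\Gamma_1$ is a single point, so $\nabla_\Gamma$, $\Delta_\Gamma$ and $\delta\Delta_\Gamma y_\Gamma$ all vanish, $L_\Gamma(y,y_\Gamma)$ collapses to $i\dot y_\Gamma - d\partial_x y(0,\cdot)$ (note $\pnu = -\partial_x$ at $x=0$), the potential space $\mathcal{X}^r = L^r(0,\ell;\R)\times\R$ is exactly $\mathbb{L}^r$ in this case, and $\Gamma_\star = \{\ell\}$ where the flux $\partial_\nu y = \partial_x y(\ell,\cdot)$ is measured. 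One first checks that the geometric assumptions \textbf{(A1)}--\textbf{(A2)} degenerate correctly: take $\Omega_1=(-1,0)$ (a "strongly convex" interval after the normalization $0\in\Omega_1$ — here one must argue that in dimension one the curvature hypothesis is vacuous and the Minkowski functional $\mu(x)=|x|$, $\psi(x)=x^2$ still yields a valid Carleman weight with $\partial_\nu\psi\geq 0$ at $x=\ell$), and $\Omega_0 = (-a,\ell)$ for suitable $a>1$.

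The main argument then runs exactly as in the proof of Theorem~\ref{Thm:stability:boundary}: set $u = y[q,q_\Gamma] - y[p,p_\Gamma]$ and $u_\Gamma = y_\Gamma[q,q_\Gamma]-y_\Gamma[p,p_\Gamma]$, which solves a system of the form \eqref{1D:original:problem} with zero initial data and source terms $(q-p)y[q,q_\Gamma]$ and $(q_\Gamma-p_\Gamma)y_\Gamma[q,q_\Gamma]$. Following the Bukhgeim--Klibanov scheme, one differentiates in time, uses the positivity \eqref{condition:initial:datum:stability} of $y_0, y_\Gamma^0$ together with the regularity $(y[q,q_\Gamma],y_\Gamma[q,q_\Gamma])\in H^1(0,T;\mathbb{L}^\infty)$ to recover $(q-p, q_\Gamma-p_\Gamma)$ from the time-derivative of $u$ at $t=0$, extends the relevant functions evenly/oddly to $(-T,T)$, and applies the 1D Carleman estimate (the specialization of the one in \cite{Mercado2023Exact}) with the weight built from $\psi(x)=x^2$. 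The boundary terms produced are controlled by $\|\partial_x u(\ell,\cdot)\|_{H^1(0,T)}$, which is finite by hypothesis. The left inequality in the conclusion is the stability estimate thus obtained; the right inequality is the easy direct estimate, obtained from the well-posedness bound for the system satisfied by $(u,u_\Gamma)$ together with a trace/hidden-regularity estimate for $\partial_x u(\ell,\cdot)$ in $H^1(0,T)$, which is linear in $\|(q,q_\Gamma)-(p,p_\Gamma)\|_{\mathcal{X}^2}$.

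Alternatively — and this may be cleaner to write — one does not invoke Theorem~\ref{Thm:stability:boundary} as a black box but instead repeats its proof verbatim in the 1D setting, since the Carleman estimate, the energy identities, and the Bukhgeim--Klibanov linearization are all dimension-agnostic; the only genuinely 1D simplification is that all tangential differential operators on $\Gamma_1$ disappear, which only makes the estimates easier (in particular the condition $\delta>d$ in \eqref{condition:d:delta} is not needed here because there is no $\delta\Delta_\Gamma$ term to absorb). I would state this explicitly and then say the proof is "entirely analogous to that of Theorem~\ref{Thm:stability:boundary}, and in fact simpler," indicating the points of simplification rather than reproducing every computation.

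The hard part will be justifying that the degenerate geometry genuinely fits the framework: in dimension one a point-boundary $\Gamma_1$ is not a submanifold with a nondegenerate Riemannian metric, so the clean route is to treat the 1D theorem as an independent statement whose proof mirrors the $n\geq 2$ one, being careful that (i) the Minkowski-functional weight $\psi(x)=\mu^2(x)$ with $\Omega_1$ an interval still satisfies the pseudoconvexity/sign conditions required by the Carleman estimate at the observed endpoint $x=\ell$, and (ii) the positivity and regularity hypotheses on the 1D data indeed supply all the ingredients (bounded potentials $p_1\in L^\infty$, $H^1$-in-time reference solution, lower bounds on $|y^0|,|y_\Gamma^0|$) that the Bukhgeim--Klibanov argument consumes. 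Once these structural checks are in place, the estimate itself is routine.
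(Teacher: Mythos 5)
Your overall strategy in the second half of the proposal --- do not use Theorem \ref{Thm:stability:boundary} as a black box (its hypotheses {\bf (A1)}--{\bf (A2)} and the whole framework are formulated for $n\geq 2$, and $\Gamma_1$ reduced to a point is not a Riemannian submanifold), but instead prove a dedicated one-dimensional Carleman estimate and rerun the Bukhgeim--Klibanov argument of Section \ref{section:proof:main:thm} --- is exactly what the paper does: Lemma \ref{lemma:Carleman:1D} in Appendix A is proved precisely for this purpose, and your remarks that the tangential operators disappear and that the condition $\delta>d$ is not needed in 1D are consistent with that lemma.

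There is, however, a concrete error in the one ingredient you flagged as ``the hard part'': the weight. You propose $\Omega_1=(-1,0)$ with $\mu(x)=|x|$, $\psi(x)=x^2$, i.e.\ a weight centered at the dynamic-boundary point $x=0$. First, $0\notin(-1,0)$, so this is not the Minkowski gauge of your $\Omega_1$; more importantly, $\psi'(0)=0$, and the 1D Carleman estimate genuinely needs $\psi'\geq\gamma>0$ on all of $[0,\ell]$, including at $x=0$. In the proof of Lemma \ref{lemma:Carleman:1D} the positive boundary terms at the dynamic end, namely $d^2s^3\lambda^3\int_{-T}^{T}\psi'(0)^3\theta(0,\cdot)^3|w(0,\cdot)|^2\,dt$ (from $I_{12}$) and $d^2s\lambda\int_{-T}^{T}\psi'(0)\theta(0,\cdot)|\partial_x w(0,\cdot)|^2\,dt$ (from $I_{22}$), are what absorb the sign-indefinite boundary contributions coming from $I_{21}$, $I_{23}$, $I_{32}$ and from $Q_1\overline{Q_2}$, and they are also what puts $s^3\lambda^3\theta^3(0,\cdot)|v(0,\cdot)|^2+s\lambda\theta(0,\cdot)|\partial_x v(0,\cdot)|^2$ on the left of \eqref{Carleman:1D}; with $\psi'(0)=0$ all of these vanish and the estimate collapses at the dynamic boundary. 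The correct 1D analogue of the Minkowski construction is to center the weight strictly inside the ``hole'': the paper takes $\psi(x)=|x-x_1|^2$ with $x_1=-a$, $a>0$ (see \eqref{weights:1-D}), so that $\psi'\geq 2a>0$ on $[0,\ell]$, which is what your $\Omega_1=(-b,0)$ picture gives if the gauge is taken with respect to an interior point $-a\in(-b,0)$ rather than the boundary point $0$. With that correction, the rest of your plan (even extension to $(-T,T)$, differentiation in time, positivity of the initial data, absorption of the $f\partial_t R$ terms, and the easy direction via well-posedness plus hidden regularity of $\partial_x u(\ell,\cdot)$) matches the paper's argument.
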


\subsection{Outline of the paper}
The rest of the article is organized as follows. In Section \ref{section:preliminaries} 
we review some basic results on the existence and uniqueness of solutions to the Schrödinger equation with dynamic boundary conditions. We also present a Carleman estimate for these systems with observations on part of the boundary. In Section \ref{section:proof:main:thm}, we prove the stability of the inverse problem discussed in Theorem \ref{Thm:stability:boundary}. Section \ref{section:convergence:CbRec} focuses on the convergence of the reconstruction algorithm. Finally, Section \ref{section:further:comments} provides concluding remarks and additional observations.

%Theorem \ref{Thm:Carleman:ip}. In Section  \ref{section:proof:Thm:stability} we prove the . Section \ref{section:Thm:properties:J} is dedicated to prove the Theorem \ref{Thm:properties:J}, i.e., the properties of the functional $J$.

%%%%%%%%%%%%%%%%%%%%%%%%%%%%%%%%%%%%%%%%%%%%%%%%%%%%%%%%%%%%%%%%%%%%%%%%%%%%%%%%%%%
\section{Preliminaries}
\label{section:preliminaries}
In this section, we present existence, uniqueness, and other basic properties of the Schrödinger equation with dynamic boundary conditions. Additionally, we present  a Carleman estimate for this system, considering observations on a subset of the boundary.

%In this section, we provide some existence and uniqueness results for the Schr\"odinger equation with dynamic boundary conditions. We also give a Carleman estimate for the Schr\"odinger operator with these boundary conditions with observation on a part of the domain. 
\subsection{Existence and uniqueness of solutions and hidden regularity}
%\RM{agregar referencias en cada propos} \AMS{Hecho, revisar si ok la notación}\\
Given $d,\delta>0$, we consider the problem 
\begin{align}
    \label{wellposedness:schrodinger}
    \begin{cases}
        i\pt v +d\Delta v +\vec{\rho}_1\cdot \nabla v +\rho_0 v =h&\text{ in }\Omega\times (0,T),\\
        i\pt v_\Gamma -d\pnu v +\delta \Delta_\Gamma v_\Gamma +\vec{\rho}_{\Gamma,1} \cdot \nabla_\Gamma v_\Gamma +\rho_{\Gamma,0}v_\Gamma= h_\Gamma&\text{ on }\Gamma_1 \times (0,T),\\
        v\big|_{\Gamma_1}=v_\Gamma &\text{ on }\Gamma_1\times (0,T),\\
        v\big|_{\Gamma_0}=0&\text{ on }\Gamma_0\times (0,T),\\
        (v(\cdot,0),v_\Gamma(\cdot,0))=(v_0,v_{\Gamma,0})&\text{ in }\Omega\times \Gamma_1. 
    \end{cases}
\end{align}

Concerning the existence and uniqueness of \eqref{wellposedness:schrodinger}, we have the following result:

\begin{proposition}[Proposition 2.1 \cite{Mercado2023Exact}]\label{proposition:existence:01}
    Suppose that $(v_0,v_{\Gamma,0})\in \mathbb{L}^2$, $\vec{\rho}_1\in [W^{1,\infty}(\Omega)]^{n}$. Furthermore, suppose that $\vec{\rho}_{\Gamma,1}\in [W^{1,\infty}(\Gamma_1)]^n$ such that $\vec{\rho}_1=\vec{\rho}_{\Gamma,1}$ on $\Gamma_1\times (0,T)$, $(\rho_0,\rho_{\Gamma,1})\in \mathbb{L}^\infty$ and $(h,h_\Gamma)\in L^1(0,T;\mathbb{L}^2)$. Then, the weak solution $(v,v_\Gamma)$ belongs to $C^0 ([0,T];\mathbb{L}^2)$. Besides, there exists $C>0$ such that 
    \begin{align*}
        \max_{t\in [0,T]} \|(v,v_\Gamma)\|_{\mathbb{L}^2} \leq C\left( \|(h,h_\Gamma)\|_{L^1(0,T;\mathbb{L}^2)} + \|(v_0,v_{\Gamma,0})\|_{\mathbb{L}^2} \right). 
    \end{align*}
\end{proposition}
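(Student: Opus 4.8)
The plan is to realize \eqref{wellposedness:schrodinger} as an abstract Cauchy problem on the Hilbert space $\mathbb{L}^2$ and to read off the estimate from a $C_0$-group together with Duhamel's formula. First I would organize the functional framework around the principal part. On $\mathbb{H}_{\Gamma_0}^1$ introduce the sesquilinear form
\[
a_0\big((u,u_\Gamma),(w,w_\Gamma)\big)=d\int_\Omega\nabla u\cdot\nabla\overline{w}\,dx+\delta\int_{\Gamma_1}\nabla_\Gamma u_\Gamma\cdot\nabla_\Gamma\overline{w_\Gamma}\,dS.
\]
By Poincar\'e's inequality (available since $u=0$ on $\Gamma_0$) and the trace theorem, $a_0$ is bounded and coercive on $\mathbb{H}_{\Gamma_0}^1$, so it defines a nonnegative self-adjoint operator $\mathcal{A}_0$ on $\mathbb{L}^2$ whose action, after integrating by parts and using the surface divergence theorem \eqref{surface:divergence:theorem}, is $\mathcal{A}_0(u,u_\Gamma)=(-d\Delta u,\; d\pnu u-\delta\Delta_\Gamma u_\Gamma)$ on the pairs in $\mathbb{H}_{\Gamma_0}^2$ satisfying the natural boundary condition. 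By Stone's theorem $-i\mathcal{A}_0$ generates a unitary group on $\mathbb{L}^2$, and the zeroth-order multiplications $(\rho_0,\rho_{\Gamma,0})\in\mathbb{L}^\infty$, being bounded on $\mathbb{L}^2$, are absorbed by bounded perturbation, still producing a $C_0$-group with $\|U(t)\|\leq e^{\omega|t|}$.

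The heart of the matter is the pair of first-order terms $\vec\rho_1\cdot\nabla v$ and $\vec\rho_{\Gamma,1}\cdot\nabla_\Gamma v_\Gamma$: these are \emph{not} bounded on $\mathbb{L}^2$ (they map only $\mathbb{H}_{\Gamma_0}^1$ into $\mathbb{L}^2$) and are exactly where the nonconservative character enters. To see where the difficulty localizes I would run a Galerkin scheme in the eigenbasis of $\mathcal{A}_0$ (whose elements already encode $u|_{\Gamma_1}=u_\Gamma$ and $u|_{\Gamma_0}=0$), test the bulk equation by $\overline{v^N}$ and the boundary equation by $\overline{v_\Gamma^N}$, and add. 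The decisive structural feature of the Wentzell coupling is that integrating $d\int_\Omega\Delta v\,\overline{v}$ by parts produces $d\int_{\Gamma_1}\pnu v\,\overline{v_\Gamma}\,dS$, which cancels exactly the term $-d\int_{\Gamma_1}\pnu v\,\overline{v_\Gamma}\,dS$ coming from the dynamic boundary equation; taking imaginary parts then kills the real Dirichlet energies $d\|\nabla v\|^2+\delta\|\nabla_\Gamma v_\Gamma\|^2$ and leaves
\[
\tfrac12\tfrac{d}{dt}\|(v,v_\Gamma)\|_{\mathbb{L}^2}^2=\operatorname{Im}\langle (h,h_\Gamma),(v,v_\Gamma)\rangle_{\mathbb{L}^2}-\operatorname{Im}\langle\text{(first order)},(v,v_\Gamma)\rangle_{\mathbb{L}^2}-\operatorname{Im}\langle\text{(zeroth order)},(v,v_\Gamma)\rangle_{\mathbb{L}^2}.
\]

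The main obstacle is precisely the first-order contribution: its imaginary part genuinely involves $\nabla v$ and $\nabla_\Gamma v_\Gamma$, so it is \emph{not} controlled by $\|(v,v_\Gamma)\|_{\mathbb{L}^2}^2$ — the identity does not close at the $\mathbb{L}^2$ level, and since the Schr\"odinger flow has no parabolic smoothing it does not close by bootstrapping to $\mathbb{H}_{\Gamma_0}^1$ either. Rather than estimate this term I would remove it through the semigroup-generation theory for the full operator: exploiting that $\Omega$ is bounded and $\vec\rho_1\in[W^{1,\infty}(\Omega)]^n$ (so that multiplication by $e^{\pm\phi}$ is bounded and boundedly invertible on $\mathbb{L}^2$), a gauge-type substitution absorbing the drift conjugates the generator into $-i\mathcal{A}_0$ plus bounded (zeroth-order) operators only — or, alternatively, one invokes a perturbation result for generators of $C_0$-groups; in either route the hypothesis $\vec\rho_1=\vec\rho_{\Gamma,1}$ on $\Gamma_1$ is what makes the construction compatible with the coupling $v|_{\Gamma_1}=v_\Gamma$. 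Conjugating back (again bounded both ways on the bounded domain) yields a $C_0$-group $U(t)$ for the original operator with $\|U(t)\|_{\mathcal{L}(\mathbb{L}^2)}\leq Me^{\omega|t|}$.

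With the group in hand the conclusion is quick. Writing the mild solution by Duhamel,
\[
(v,v_\Gamma)(t)=U(t)(v_0,v_{\Gamma,0})-i\int_0^t U(t-s)(h,h_\Gamma)(s)\,ds,
\]
the bound $\|U(t)\|\leq Me^{\omega T}$ on $[0,T]$ gives directly $\max_{t\in[0,T]}\|(v,v_\Gamma)\|_{\mathbb{L}^2}\leq C\big(\|(h,h_\Gamma)\|_{L^1(0,T;\mathbb{L}^2)}+\|(v_0,v_{\Gamma,0})\|_{\mathbb{L}^2}\big)$. Strong continuity of $U(\cdot)$ together with $(h,h_\Gamma)\in L^1(0,T;\mathbb{L}^2)$ makes the Duhamel integral continuous, so $(v,v_\Gamma)\in C^0([0,T];\mathbb{L}^2)$, and a density argument (smooth approximation of the data, for which the mild solution coincides with the Galerkin limit and is a strong solution) identifies this mild solution with the weak solution and transfers the estimate. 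I expect essentially all the difficulty to sit in the first-order terms; coercivity of $a_0$, the boundary cancellation, Stone's theorem, the bounded perturbation and Duhamel are routine once that step is settled.
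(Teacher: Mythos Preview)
The paper does not prove this proposition at all: it is merely quoted from \cite{Mercado2023Exact} as a known result, so there is no ``paper's own proof'' to compare against. What I can do is assess your sketch on its own terms.

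Your overall architecture---realize the system as an abstract Cauchy problem on $\mathbb{L}^2$, identify the principal Wentzell operator $\mathcal{A}_0$ via the coercive form $a_0$, apply Stone's theorem to get a unitary group for $-i\mathcal{A}_0$, absorb the zeroth-order multiplications by bounded perturbation, and finish with Duhamel---is the right shape, and your observation that the normal-derivative flux produced by integrating $d\Delta v$ by parts cancels exactly against the $-d\partial_\nu v$ in the boundary equation is the key structural feature of the Wentzell coupling. You also correctly isolate the only nontrivial step: the first-order drift terms $\vec\rho_1\cdot\nabla v$ and $\vec\rho_{\Gamma,1}\cdot\nabla_\Gamma v_\Gamma$, whose contribution to the $\mathbb{L}^2$ energy identity is $\operatorname{Im}\int_\Omega(\vec\rho_1\cdot\nabla v)\overline{v}\,dx$ and genuinely involves $\nabla v$.

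The gap is in how you dispose of that term. Your ``gauge-type substitution'' $v=e^{\phi}w$ eliminates the drift only if $2d\nabla\phi+\vec\rho_1=0$, i.e.\ only if $\vec\rho_1$ is a gradient field; nothing in the hypotheses forces $\vec\rho_1$ to be curl-free, so in dimension $n\geq 2$ this conjugation is unavailable in general. Your fallback---``one invokes a perturbation result for generators of $C_0$-groups''---is not a proof: the drift is unbounded on $\mathbb{L}^2$, so bounded-perturbation theorems do not apply, and although $\vec\rho_1\cdot\nabla$ is $\mathcal{A}_0$-bounded with relative bound $0$, the standard Kato--Rellich/Lumer--Phillips machinery yields self-adjointness or semigroup generation, not directly group generation for $i(\mathcal{A}_0+B)$ when $B$ is neither symmetric nor skew-adjoint. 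Indeed, writing $\vec\rho_1\cdot\nabla=\big(\vec\rho_1\cdot\nabla+\tfrac12\operatorname{div}\vec\rho_1\big)-\tfrac12\operatorname{div}\vec\rho_1$ splits off a skew-adjoint part, but $\langle Sv,v\rangle$ for skew-adjoint $S$ is purely imaginary, which is exactly the wrong parity for the dissipativity inequality you need. To close the argument you must either (i) supply a genuine perturbation theorem that covers this situation (with precise reference and verification of its hypotheses), or (ii) run the well-posedness first at the $\mathbb{H}^1_{\Gamma_0}$ level---where gradient terms are part of the norm---and then descend to $\mathbb{L}^2$ by duality/interpolation or a density argument. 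As written, the crucial step is asserted rather than proved.
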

 
The following results relies on the existence and uniqueness of weak solutions of \eqref{wellposedness:schrodinger} for more regular data.

\begin{proposition}[Proposition 2.2 \cite{Mercado2023Exact}]\label{proposition:existence:02}
    Suppose that $(v_0,v_{\Gamma,0})\in \mathbb{H}_{\Gamma_0}^1$ and $(h,h_\Gamma)\in L^1(0,T;\mathbb{H}_{\Gamma_0}^1)$. In addition, assume that $(\vec{\rho}_1,\vec{\rho}_{\Gamma,1})\in [W^{1,\infty}(\Omega)]^n \times [W^{1,\infty}(\Gamma_1)]^n$ satisfies the following conditions 
    \begin{align*}
        \vec{\rho}_1=\vec{\rho}_{\Gamma,1}\text{ on }\Gamma_1\times (0,T)\text{ and }\vec{\rho}_1\cdot \nu \leq 0\text{ on }\partial \Omega\times (0,T).
    \end{align*}

    We also assume that $(\rho_0,\rho_{\Gamma,0})\in [W^{1,\infty}(\Omega)]^n \times [W^{1,\infty}(\Gamma_1)]^n$ satisfies $\rho_0=\rho_{\Gamma,0}\in \Gamma_1\times (0,T)$. Then, the weak solution of \eqref{wellposedness:schrodinger} belongs to $C^0([0,T];\mathbb{H}_{\Gamma_0}^1)$ with $\pnu v\in L^2(0,T;L^2(\partial \Omega))$. Moreover, there exists $C>0$ such that 
    \begin{align*}
        \max_{t\in [0,T]} \|(v,v_\Gamma)\|_{\mathbb{H}_{\Gamma_0}^1} + \|\pnu v\|_{L^2(0,T;L^2(\partial \Omega))}\leq C \left( \|(h,h_\Gamma)\|_{L^1(0,T;\mathbb{H}_{\Gamma_0}^1)} + \|(v_0,v_{\Gamma,0})\|_{\mathbb{H}_{\Gamma_0}^1} \right).
    \end{align*}
\end{proposition}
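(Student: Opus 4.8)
The plan is to read this as a well-posedness-plus-hidden-regularity statement in the energy space and to prove it by combining semigroup theory (for existence and uniqueness) with multiplier identities of energy and Rellich--Morawetz type (for the quantitative estimate), justifying every integration by parts by first working with smooth data and then passing to the limit by density. Since Proposition \ref{proposition:existence:01} already provides the $\mathbb{L}^2$ theory, the genuinely new content is the propagation of $\mathbb{H}^1_{\Gamma_0}$ regularity in time, the trace bound $\pnu v\in L^2(0,T;L^2(\partial\Omega))$, and the continuous dependence.

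First I would set up the abstract framework. Define the self-adjoint "energy" operator $\mathcal B(v,v_\Gamma)=(-d\Delta v,\,d\pnu v-\delta\Delta_\Gamma v_\Gamma)$ on $\mathbb{L}^2$, with domain built from $\mathbb{H}^2_{\Gamma_0}$ and the Wentzell coupling; its associated form $a((v,v_\Gamma),(w,w_\Gamma))=d\int_\Omega\nabla v\cdot\nabla\overline w+\delta\int_{\Gamma_1}\nabla_\Gamma v_\Gamma\cdot\nabla_\Gamma\overline{w_\Gamma}$ is continuous and coercive on $\mathbb{H}^1_{\Gamma_0}$ by Poincar\'e, so $D(\mathcal B^{1/2})=\mathbb{H}^1_{\Gamma_0}$. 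By Stone's theorem $i\mathcal B$ generates a unitary group, and the first- and zeroth-order terms are a bounded perturbation at the energy level; the compatibility hypotheses $\vec\rho_1=\vec\rho_{\Gamma,1}$ and $\rho_0=\rho_{\Gamma,0}$ on $\Gamma_1$ guarantee that this perturbation respects the interface condition $v|_{\Gamma_1}=v_\Gamma$, so Duhamel's formula yields a solution in $C^0([0,T];\mathbb{H}^1_{\Gamma_0})$ for data in the energy space. For the quantitative bound I would test the interior equation with $\pt\overline v$ and the boundary equation with $\pt\overline{v_\Gamma}$, integrate over $\Omega$ and $\Gamma_1$, and take real parts. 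Green's formula on $d\int_\Omega\Delta v\,\pt\overline v$ produces a boundary term $d\int_{\partial\Omega}\pnu v\,\pt\overline v$; since $v=0$ on $\Gamma_0$ and $\overline v=\overline{v_\Gamma}$ on $\Gamma_1$, this is exactly cancelled by the $-d\pnu v$ contribution of the boundary equation. What remains is
\begin{align*}
\frac{d}{dt}\Big(d\norm{\nabla v}_{L^2(\Omega)}^2+\delta\norm{\nabla_\Gamma v_\Gamma}_{L^2(\Gamma_1)}^2\Big)=\text{(lower-order terms and data)},
\end{align*}
where the convection term is handled by one further integration by parts, the sign hypothesis $\vec\rho_1\cdot\nu\le0$ being used to discard the resulting boundary integral with the correct sign; Gr\"onwall then gives the $\mathbb{H}^1_{\Gamma_0}$ estimate.

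For the hidden trace regularity I would use a Rellich--Morawetz multiplier: fix a $C^1$ vector field $m$ with $m=\nu$ on $\partial\Omega$, test the interior equation with $\overline{m\cdot\nabla v}$, and integrate by parts in space and time. The principal part yields, on $\partial\Omega$, the term $\tfrac d2\int_0^T\int_{\partial\Omega}(m\cdot\nu)\lvert\pnu v\rvert^2$ (on $\Gamma_0$ the tangential gradient vanishes, so $\lvert\nabla v\rvert^2=\lvert\pnu v\rvert^2$), which one isolates; the time-derivative term contributes boundary-in-time quantities $\mathrm{Im}\int_\Omega v\,\overline{m\cdot\nabla v}$ at $t=0,T$, controlled by the $\mathbb{H}^1$ energy, while the interior remainder and the $\Gamma_1$ coupling are bounded by the energy and by $(h,h_\Gamma)$. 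This produces the bound on $\norm{\pnu v}_{L^2(0,T;L^2(\partial\Omega))}$. All of these manipulations are first performed for data in $D(\mathcal B)$, where every trace and integration by parts is legitimate; since the a priori estimates are uniform and $D(\mathcal B)$ is dense in the energy space, a closure argument extends both the regularity and the estimate to general data, with $C$ depending only on $T$, $\Omega$, $d$, $\delta$ and the $W^{1,\infty}$ norms of the coefficients.

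The main obstacle is the hidden-regularity step at the interface $\Gamma_1$. There $\pnu v$ is coupled through the dynamic boundary condition to $\pt v_\Gamma$ and $\Delta_\Gamma v_\Gamma$, so one cannot simply discard the $\Gamma_1$ boundary integrals as in the pure Dirichlet case. I would substitute $d\pnu v$ from the boundary equation and verify that the resulting tangential and time-derivative contributions land in spaces already controlled by the $\mathbb{H}^1_{\Gamma_0}$ energy; this forces care in the choice of $m$ near $\Gamma_1$ and in treating $\pt v$ through the equation rather than as an independent quantity, which is the delicate technical heart of the argument.
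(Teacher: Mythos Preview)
The paper does not contain a proof of this proposition at all: it is quoted verbatim from \cite{Mercado2023Exact} (as the bracketed label ``Proposition 2.2 \cite{Mercado2023Exact}'' indicates) and is used here only as a black-box well-posedness and hidden-regularity input for the inverse problem analysis. So there is no ``paper's own proof'' to compare against.

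That said, your outline is a sensible sketch of how such results are typically obtained: semigroup theory on $\mathbb{L}^2$ with $D(\mathcal B^{1/2})=\mathbb{H}^1_{\Gamma_0}$ gives the $C^0([0,T];\mathbb{H}^1_{\Gamma_0})$ regularity, an energy identity obtained by testing with $\partial_t\overline v$ and $\partial_t\overline{v_\Gamma}$ (with the Wentzell coupling cancelling the boundary flux) followed by Gr\"onwall gives the a priori bound, and a Rellich--Morawetz multiplier $m\cdot\nabla\overline v$ recovers $\partial_\nu v\in L^2(0,T;L^2(\partial\Omega))$. You correctly flag the genuine difficulty, namely handling the $\Gamma_1$ boundary contributions in the multiplier identity by substituting $d\partial_\nu v$ from the dynamic boundary equation; this is indeed the technical heart and is where the compatibility hypotheses and the sign condition $\vec\rho_1\cdot\nu\le 0$ are used. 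One point to be careful about: testing with $\partial_t\overline v$ requires $\partial_t v\in L^2$, which is not a priori available at the $\mathbb{H}^1_{\Gamma_0}$ level; you would either need to work first with strong solutions (data in $D(\mathcal B)$) and pass to the limit, as you indicate, or else use the equation to substitute for $\partial_t v$ before testing. Your closing density argument handles this, but it is worth making explicit that the formal energy computation is only legal after that regularization.
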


The next result establishes a hidden regularity result for the Schr\"odinger equation with dynamic boundary conditions.

\begin{proposition}[Proposition 2.5 \cite{Mercado2023Exact}]
    Under the conditions of the Proposition \ref{proposition:existence:02}, the normal derivative of the solution of \eqref{wellposedness:schrodinger} $(v,v_\Gamma)\in C^0([0,T];\mathbb{H}_{\Gamma_0}^1)$ belongs to $L^2(0,T;L^2(\partial \Omega))$. Moreover, there exists a constant $C>0$ such that
    \begin{align*}
        \|\pnu v\|_{L^2(0,T;L^2(\partial \Omega))}\leq C\left(\|(h,h_\Gamma) \|_{L^1(0,T;\mathbb{H}_{\Gamma_0}^1)} + \|(v_0,v_{\Gamma,0})\|_{\mathbbm{H}_{\Gamma_0}^1} \right). 
    \end{align*}
\end{proposition}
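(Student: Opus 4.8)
The plan is to prove this \emph{hidden regularity} estimate by the multiplier (Rellich--Pohozaev) method, using Proposition \ref{proposition:existence:02} only to control the interior energy $\max_{t}\|(v,v_\Gamma)\|_{\mathbb{H}_{\Gamma_0}^1}$ by the data. Since for $v\in H^1(\Omega)$ the trace $\pnu v$ is not a priori in $L^2(\partial\Omega)$, the whole point is to recover it as a genuine gain of regularity. I would first carry out the computation for data $(v_0,v_{\Gamma,0})$ and $(h,h_\Gamma)$ smooth enough that the corresponding solution is regular (say $(v,v_\Gamma)\in C^0([0,T];\mathbb{H}_{\Gamma_0}^2)$ with $\pt v\in C^0([0,T];\mathbb{L}^2)$), so that all integrations by parts below are licit, and then pass to the general $\mathbb{H}_{\Gamma_0}^1$ data by density, invoking the uniform bound just obtained together with the continuity of the solution map from Proposition \ref{proposition:existence:02}.

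Second, fix a vector field $m\in C^1(\overline\Omega;\R^n)$ extending the outward unit normal, $m=\nu$ on $\partial\Omega$, and use the multiplier $q:=m\cdot\nabla\bar v$. Multiplying the interior equation of \eqref{wellposedness:schrodinger} by $q$, integrating over $\Omega\times(0,T)$ and taking real parts, the principal term yields the Rellich identity
\begin{align*}
d\,\mathrm{Re}\int_0^T\!\!\int_\Omega \Delta v\,(m\cdot\nabla\bar v)\,dx\,dt
&= d\,\mathrm{Re}\int_0^T\!\!\int_{\partial\Omega}\pnu v\,(m\cdot\nabla\bar v)\,dS\,dt
-\frac{d}{2}\int_0^T\!\!\int_{\partial\Omega}(m\cdot\nu)\,|\nabla v|^2\,dS\,dt\\
&\quad - d\,\mathrm{Re}\int_0^T\!\!\int_\Omega (\partial_j m_k)\,\partial_j v\,\partial_k\bar v\,dx\,dt
+\frac{d}{2}\int_0^T\!\!\int_\Omega (\div m)\,|\nabla v|^2\,dx\,dt.
\end{align*}
On $\Gamma_0$ the Dirichlet condition forces $\nabla v=(\pnu v)\nu$, so the two boundary integrals there combine into $\tfrac{d}{2}\int_0^T\!\int_{\Gamma_0}|\pnu v|^2$; on $\Gamma_1$ the decomposition $\nabla v=(\pnu v)\nu+\nabla_\Gamma v_\Gamma$ gives $\tfrac{d}{2}\int_0^T\!\int_{\Gamma_1}|\pnu v|^2$ minus a tangential term $\tfrac{d}{2}\int_0^T\!\int_{\Gamma_1}|\nabla_\Gamma v_\Gamma|^2$ controlled by $\|v_\Gamma\|_{H^1(\Gamma_1)}$. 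Hence the full quantity $\tfrac{d}{2}\int_0^T\!\int_{\partial\Omega}|\pnu v|^2$ is isolated, while the two interior integrals are bounded by $\int_0^T\|\nabla v\|_{L^2(\Omega)}^2$.

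Third, I would replace $d\Delta v$ through the equation, $d\Delta v=-i\pt v-\vec\rho_1\cdot\nabla v-\rho_0 v+h$, so that the left-hand side above becomes a sum of lower-order contributions. The terms carrying $\vec\rho_1\cdot\nabla v$, $\rho_0 v$ and $h$ are estimated by Cauchy--Schwarz using $\vec\rho_1\in[W^{1,\infty}(\Omega)]^n$, $\rho_0\in L^\infty(\Omega)$ and $q\in L^\infty(0,T;L^2(\Omega))$, the $h$-term producing the factor $\|(h,h_\Gamma)\|_{L^1(0,T;\mathbb{H}_{\Gamma_0}^1)}$. The time term $-\mathrm{Re}\int_0^T\!\int_\Omega i\pt v\,(m\cdot\nabla\bar v)$ is treated via the identity $2\,\mathrm{Re}\,\bigl[i\pt v\,(m\cdot\nabla\bar v)\bigr]=i\,m_k\bigl[\partial_k(\pt v\,\bar v)-\pt(\partial_k v\,\bar v)\bigr]$, which after integration produces an endpoint-in-time term $\bigl[\int_\Omega(m\cdot\nabla v)\bar v\bigr]_0^T$ bounded by the energy at $t=0,T$, an interior term $\int_\Omega(\div m)\pt v\,\bar v$ rewritten again through the equation, and a boundary-in-time term on $\Gamma_1$ of the form $\int_0^T\!\int_{\Gamma_1}\pt v_\Gamma\,\bar v_\Gamma$.

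Finally, collecting everything gives $\tfrac{d}{2}\|\pnu v\|_{L^2(0,T;L^2(\partial\Omega))}^2\le C\bigl(\max_t\|(v,v_\Gamma)\|_{\mathbb{H}_{\Gamma_0}^1}^2+\max_t\|(v,v_\Gamma)\|_{\mathbb{H}_{\Gamma_0}^1}\,\|(h,h_\Gamma)\|_{L^1(0,T;\mathbb{H}_{\Gamma_0}^1)}\bigr)$, and Proposition \ref{proposition:existence:02} bounds the right-hand side by the data, which yields the claimed inequality. I expect the main obstacle to be the boundary contributions on $\Gamma_1$: unlike the clean Dirichlet part on $\Gamma_0$, there the trace $v_\Gamma$ does not vanish, so the boundary-in-time term must be handled with the dynamic boundary condition (expressing $i\pt v_\Gamma$ in terms of $d\pnu v$, $\delta\Delta_\Gamma v_\Gamma$ and the tangential lower-order terms, then applying the surface divergence theorem \eqref{surface:divergence:theorem} to the Laplace--Beltrami contribution), and every resulting occurrence of $\pnu v$ on $\Gamma_1$ must be absorbed into the left-hand side by Young's inequality with a small parameter. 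Ensuring that these absorptions close without circularity, and that the final constant depends only on the data as stated, is the delicate point.
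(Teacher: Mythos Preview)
The paper does not give its own proof of this proposition: it is quoted verbatim from \cite{Mercado2023Exact} (note the citation in the proposition header), and no argument is reproduced. The only hint the authors give is the sentence immediately following it, where they remark that the analogous one-dimensional hidden regularity ``can be achieved using multiplier techniques''. Your Rellich--Pohozaev multiplier argument is precisely that standard technique, so there is no discrepancy to report; your plan is the expected one, and your identification of the $\Gamma_1$ boundary contributions as the delicate point---to be handled by substituting the dynamic boundary condition, applying the surface divergence theorem \eqref{surface:divergence:theorem}, and absorbing the resulting $\pnu v$ terms via Young's inequality---is correct.
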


We can also obtain existence and uniqueness results for the one-dimensional Schr\"odinger equation with dynamic boundary conditions as well as a hidden regularity result for the normal derivative. These results can be achieved using multiplier techniques.

\subsection{A Carleman estimate for the Schr\"odinger equation with dynamic boundary conditions}

Consider the function $\psi=\psi(x,t)$ defined in \eqref{def:psi}. We also define, for $\lambda>0$, the following functions 
\begin{align}
\label{weights:theta:varphi}
\theta(x,t):=\dfrac{e^{\lambda \psi(x)}}{(T+t)(T-t)},\quad \varphi(x,t):=\dfrac{\alpha -e^{\lambda \psi(x)}}{(T+t)(T-t)},\quad \forall (x,t)\in \Omega\times (-T,T),
\end{align} 
where $\alpha > \|e^{\lambda \psi}\|_{L^\infty(\Omega)}$.

For $(\vec{\rho}_1,\vec{\rho}_{\Gamma,1})\in L^\infty(-T,T;[\mathbb{L}^\infty]^n)$ and $(\rho_0,\rho_{\Gamma,0})\in L^\infty(-T,T;\mathbb{L}^\infty)$, we introduce the operators
\begin{align}
\label{def:tilde:L}
\tilde{L}(v):=i\pt v-d\Delta v-\vec{\rho}_{1}\cdot \nabla v+\rho_0v\quad \text{ in }\Omega\times (-T,T)
\end{align}
and
\begin{align}
\label{def:tilde:L:gamma}
\tilde{L}(v,v_\Gamma):=i\pt v_\Gamma +d\pnu v -\Delta_\Gamma v_\Gamma -\vec{\rho}_{\Gamma,1}\cdot \nabla_\Gamma v_\Gamma +\rho_{\Gamma,0}v_\Gamma\quad \text{ on }\Gamma_1\times (-T,T).
\end{align}

Moreover, due to the last identity, from now on we shall write $v$ instead of $v_\Gamma$ on $\partial \Omega\times (0,T)$.
%\begin{proof}[Proof of Theorem \ref{Thm:Carleman:ip} ]
Firstly, define the operators 
\begin{align*}
P_1 w=ds^2|\nabla \varphi|^2 w+d\Delta w+i\pt w,\quad P_2 w=ds\Delta\varphi w+2ds\nabla \varphi\cdot \nabla w+is\pt \varphi w,
\end{align*}
and 
\begin{align*}
    Q_1 w=\delta \Delta_\Gamma w+i\pt w,\quad Q_2 w=-ds\pnu \varphi w+is\pt \varphi w.
\end{align*}

%Then, following the steps of the proof of Theorem 1.4 in \cite{Mercado2023Exact}, it is not difficult to see that there exists $C>0$, $s_*>0$ and $\lambda_*>0$ such that the following Carleman estimate holds
\begin{theorem}
\label{thm:Carleman:Schr:Mercado:Morales}
Consider the assumptions {\bf (A1)} and {\bf (A2)} and define the weight functions $\theta$ and $\varphi$ as in \eqref{weights:theta:varphi}. We also consider $(\vec{\rho}_1,\vec{\rho}_{\Gamma,1})\in L^\infty(-T,T;[\mathbb{L}^\infty]^n)$ and $(\rho_0,\rho_{\Gamma,0})\in L^\infty(-T,T;\mathbb{L}^\infty)$. Also, consider the condition \eqref{condition:d:delta}. Then, there exist positive constants $C$, $s_0$ and $\lambda_0$ such that the following estimate holds:
\begin{align}
    \label{Carleman:control}
		\begin{split}
		&\IOTT{e^{-2s\varphi}\left(s^3\lambda^4 \theta^3 |v|^2 +s\lambda \theta |\nabla v|^2 +s\lambda^2 \theta |\nabla \psi \cdot \nabla v|^2 \right)}\\
		&+ \IGTT{e^{-2s\varphi}(s^3\lambda^3 \theta^3 |v|^2 +s\lambda \theta |\pnu v|^2 + s\lambda \theta |\nabla_\Gamma v|^2)}\\
		&+\IOTT{(|P_1 (e^{-s\varphi}v)|^2+|P_2 (e^{-s\varphi}v)|^2)}\\
		&+\IGTT{(|Q_1 (e^{-s\varphi}v)|^2 + |Q_2 (e^{-s\varphi}v)|^2)}\\
		\leq & C \IOTT{e^{-2s\varphi} |\tilde{L}(v)|^2}+ C \IGTT{e^{-2s\varphi}|\tilde{L}_\Gamma(v)|^2}\\
		&+Cs\lambda \int_{-T}^T \int_{\Gamma_\ast} e^{-2s\varphi}\theta |\pnu v|^2\, dS\, dt,\quad \forall s\geq s_0,\, \forall \lambda\geq\lambda_0,\,\forall (v,v_\Gamma)\in \mathcal{V}. 
		\end{split}
\end{align}
where $\mathcal{V}$ is defined by
\begin{align}
    \label{def:mathcal:V}
    \mathcal{V}:=\{(v,v_\Gamma )\in L^2(-T,T;\mathbb{H}_{\Gamma_0}^1)\,:\, (\tilde{L}(v),\tilde{L}_\Gamma(v,v_\Gamma))\in L^2(-T,T;\mathbb{L}^2)\text{ and }\pnu v\in L^2(0,T;L^2(\Gamma_\star))\},
\end{align}
and $\Gamma_\star$ is given in \eqref{def:Gamma:ast}.
\end{theorem}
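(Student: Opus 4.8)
The plan is to carry out the classical Carleman machinery based on the splitting of the conjugated operator into its formally self-adjoint and skew-adjoint parts, treating the first-order terms $\vec\rho_1\cdot\nabla v$ and $\vec\rho_{\Gamma,1}\cdot\nabla_\Gamma v$ as perturbations. Set $w=e^{-s\varphi}v$ and compute $e^{-s\varphi}\tilde L(e^{s\varphi}w)$ and $e^{-s\varphi}\tilde L_\Gamma(e^{s\varphi}w,e^{s\varphi}w)$. A direct computation using $\nabla v=e^{s\varphi}(\nabla w+s\nabla\varphi\,w)$ and the analogous formula for $\Delta v$ shows that, up to the lower-order remainders $Rw:=-\vec\rho_1\cdot\nabla w-s(\vec\rho_1\cdot\nabla\varphi)w+\rho_0 w$ and $R_\Gamma w$, the interior conjugated operator decomposes as $P_1w+P_2w$ and the boundary one as $Q_1w+Q_2w$, where $P_1,Q_1$ are formally self-adjoint and $P_2,Q_2$ formally skew-adjoint in the complex $L^2$ inner product. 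Since $e^{-s\varphi}\tilde L(v)=P_1w+P_2w+Rw$, I would expand $\norm{P_1w+P_2w}^2$ over $\Omega\times(-T,T)$ and the analogous boundary norm over $\Gamma_1\times(-T,T)$, isolating the cross terms $2\,\mathrm{Re}\,\langle P_1w,P_2w\rangle$ and $2\,\mathrm{Re}\,\langle Q_1w,Q_2w\rangle$, which carry the dominant positive quantities.

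Next, I would evaluate the interior cross term by integration by parts in space and time. The second-order contributions produce the weighted gradient terms $s\lambda\theta|\nabla w|^2$ and $s\lambda^2\theta|\nabla\psi\cdot\nabla w|^2$, while the commutator of the transport part with $|\nabla\varphi|^2$ produces the leading term $s^3\lambda^4\theta^3|w|^2$. Positivity of these bulk terms is where assumptions \textbf{(A1)}--\textbf{(A2)} enter: since $\psi=\mu^2$ with $\mu\geq 1$ on $\overline\Omega$ and $D^2\mu(\xi,\xi)\geq c|\xi|^2$, the Hessian $D^2\psi=2\mu\,D^2\mu+2\nabla\mu\otimes\nabla\mu$ is uniformly positive definite, so for $\lambda$ large the quadratic form in $\nabla w$ coming from $D^2\varphi=-\lambda\theta(D^2\psi+\lambda\nabla\psi\otimes\nabla\psi)$ dominates with the correct sign. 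The price of these integrations by parts is a collection of boundary integrals over $\partial\Omega\times(-T,T)$ involving $\pnu w$, $\nabla_\Gamma w$, $w$ and the factors $\pnu\varphi$, $\pt\varphi$.

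The coupling of these boundary integrals with the dynamic boundary condition is the crux of the argument, and the step I expect to be the main obstacle. On $\Gamma_1$ I would carry out the analogous computation for $2\,\mathrm{Re}\,\langle Q_1w,Q_2w\rangle$ using the surface divergence theorem \eqref{surface:divergence:theorem}, producing the tangential terms $s^3\lambda^3\theta^3|w|^2+s\lambda\theta|\nabla_\Gamma w|^2$ on $\Gamma_1$. The term $d\pnu v$ appearing in $\tilde L_\Gamma$ is precisely the mechanism that ties the interior normal-derivative boundary integrals on $\Gamma_1$ to the boundary estimate: adding the interior and boundary contributions produces a quadratic form in $(\pnu w,\nabla_\Gamma w,w)$ on $\Gamma_1$ whose sign must be controlled. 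Here the hypothesis $\delta>d$ in \eqref{condition:d:delta} is used to guarantee that this combined boundary form is positive, yielding the term $s\lambda\theta|\pnu w|^2$ on $\Gamma_1$ on the left-hand side; getting every boundary term to match and combine with the right sign is the delicate part.

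Finally, on $\Gamma_0$ the Dirichlet condition $v=0$ forces $w=0$ there, so only the normal-derivative integrals survive, weighted by $\pnu\varphi=-\lambda\theta\,\pnu\psi$. Splitting $\Gamma_0$ according to the sign of $\pnu\psi$: on $\Gamma_0\setminus\Gamma_\star$ one has $\pnu\psi<0$ and the term has favorable sign, so it is kept on the left; on $\Gamma_\star=\{\pnu\psi\geq 0\}$ the term is discarded to the right-hand side, and since $w=0$ on $\Gamma_0$ gives $\pnu w=e^{-s\varphi}\pnu v$, it becomes exactly the observation term $Cs\lambda\int_{-T}^T\int_{\Gamma_\star}e^{-2s\varphi}\theta|\pnu v|^2$. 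It remains to absorb the remainders $\norm{Rw}^2$ and $\norm{R_\Gamma w}^2$, together with all subordinate powers of $s$ and $\lambda$, which are dominated by the leading bulk terms by fixing $\lambda\geq\lambda_0$ and then $s\geq s_0$ large; and to return to $v$, one uses $\nabla w=e^{-s\varphi}(\nabla v-s\nabla\varphi\,w)$ and its boundary analogues, so that the $w$-weighted quantities convert into the stated $e^{-2s\varphi}$-weighted quantities in $v$, the extra $s^2|\nabla\varphi|^2|w|^2$ terms being reabsorbed into $s^3\lambda^4\theta^3|w|^2$. Retaining $\norm{P_1w}^2+\norm{P_2w}^2$ and $\norm{Q_1w}^2+\norm{Q_2w}^2$ on the left then yields \eqref{Carleman:control}.
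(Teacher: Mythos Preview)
Your proposal outlines the standard conjugation-and-splitting approach to Carleman estimates for Schr\"odinger operators, and the structure is correct: the decomposition into $P_1,P_2,Q_1,Q_2$, the role of the Hessian positivity of $\psi$ coming from \textbf{(A1)}--\textbf{(A2)}, the use of $\delta>d$ to control the combined boundary quadratic form on $\Gamma_1$, and the sign splitting of $\Gamma_0$ according to $\pnu\psi$ to isolate the observation term on $\Gamma_\star$. This is indeed the route followed in the reference the paper cites, and it is also the scheme the paper carries out in full detail in its Appendix for the one-dimensional model (Lemma~\ref{lemma:Carleman:1D}).

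However, you should be aware that the paper itself does \emph{not} give a proof of this theorem. Immediately after the statement, the authors write that Theorem~\ref{thm:Carleman:Schr:Mercado:Morales} is a slight modification of Theorem~1.4 in \cite{Mercado2023Exact} (the only change being that the weights are defined on $(-T,T)$ rather than $(0,T)$), and therefore omit the proof. So there is no ``paper's own proof'' to compare against here; your sketch is in fact more than the paper provides, and is consistent with the method of the cited reference and with the 1D computation in the Appendix.
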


We point out that the Theorem \ref{thm:Carleman:Schr:Mercado:Morales} is a slight modification of the estimate obtained in \cite{Mercado2023Exact} (see Theorem 1.4 in this reference), where the weight functions are defined on $(0,T)$, and therefore we omit the proof. 

%%%%%%%%%%%%%%%%%%%%%%%%%%%%%%%%%%%%%%%%%%%%%%%%%%%%%%%%%%%%%%%%%%%%%%%%%%%%%%%%%%%

\section{Proof of the stability result}
\label{section:proof:main:thm}

This section is devoted to prove  Theorem \ref{Thm:stability:boundary}. In order to do this, we need to establish a suitable Carleman estimate for the Schr\"odinger operator with dynamic boundary conditions.

\subsection{An auxiliar Carleman estimate}

We start with the following result. 

%\AMS{Ojo: cambié el nombre de Lemma por Theorem}
\begin{theorem}
	\label{Thm:Carleman:ip}
	Consider the assumptions {\bf (A1)} and {\bf (A2)}. Consider $(\vec{\rho}_1,\vec{\rho}_{\Gamma,1})\in L^\infty(-T,T;[\mathbb{L}^\infty]^n)$, $(\rho_{0},\rho_{\Gamma,0})\in L^\infty (-T,T;\mathbb{L}^\infty)$ and consider the operators $\tilde{L}$ and $\tilde{L}_\Gamma$ defined in \eqref{def:tilde:L} and \eqref{def:tilde:L:gamma}, respectively. Assume that $d,\delta >0$ satisfy the condition \eqref{condition:d:delta}. Then, there exist constants $C>0$, $s_0>0$ and $\lambda_0>0$ such that for all $\lambda\geq \lambda_0$ and $s\geq s_0$, the following inequality holds
	\begin{align}
    \label{Carleman:pi}
	\begin{split}
	&s^{3/2}\lambda^{3/2}\left(\int_\Omega e^{-2s\varphi(0)}|v(0)|^2\, dx + \int_{\Gamma_1} e^{-2s\varphi(0)}|v_\Gamma(0)|^2\, dS \right)\\
	\leq &C \IOTT{e^{-2s\varphi}|\tilde{L}(v)|^2} + C\IGTT{e^{-2s\varphi} |\tilde{L}_\Gamma(v,v_\Gamma)|^2}\\
	&+Cs\lambda \int_{-T}^{T}\int_{\Gamma_\star} e^{-2s\varphi}\theta |\pnu v|^2\, dS\, dt, \quad \forall (v,v_\Gamma)\in \mathcal{V},
	\end{split}
	\end{align}
for all $(v,v_\Gamma)\in \mathcal{V}$.
\end{theorem}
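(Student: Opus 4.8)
The plan is to derive \eqref{Carleman:pi} from the global Carleman estimate of Theorem~\ref{thm:Carleman:Schr:Mercado:Morales} by an energy argument \emph{at the time} $t=0$, exploiting that the weights $\theta,\varphi$ are even in $t$ and blow up as $t\to\pm T$. Set $w:=e^{-s\varphi}v$ and, on $\Gamma_1$, $w_\Gamma:=e^{-s\varphi}v_\Gamma$. Since $e^{-s\varphi}$ and all its $t$-derivatives tend to $0$ as $t\to\pm T$ while $\varphi(\cdot,0)$ is finite, one has (first for regular data, then by density in $\mathcal V$) that $w(\cdot,\pm T)=0$ and $w_\Gamma(\cdot,\pm T)=0$ in the trace sense, while $|w(\cdot,0)|^2=e^{-2s\varphi(\cdot,0)}|v(\cdot,0)|^2$ and likewise for $w_\Gamma$. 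Hence
\[
\int_\Omega e^{-2s\varphi(0)}|v(0)|^2\,dx+\int_{\Gamma_1} e^{-2s\varphi(0)}|v_\Gamma(0)|^2\,dS=\int_{-T}^{0}\Bigl(2\operatorname{Re}\langle\pt w,w\rangle_{L^2(\Omega)}+2\operatorname{Re}\langle\pt w_\Gamma,w_\Gamma\rangle_{L^2(\Gamma_1)}\Bigr)\,dt .
\]

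Next I would eliminate $i\pt w$ and $i\pt w_\Gamma$ through the conjugated operators. By the definition of $P_1$, $i\pt w=P_1w-d\Delta w-ds^2|\nabla\varphi|^2w$; multiplying by $\bar w$, integrating over $\Omega$, taking imaginary parts and integrating by parts (the real quantities $\int_\Omega|\nabla w|^2$ and $\int_\Omega|\nabla\varphi|^2|w|^2$ drop out, and $w=0$ on $\Gamma_0$) gives
\[
2\operatorname{Re}\langle\pt w,w\rangle_{L^2(\Omega)}=2\operatorname{Im}\langle P_1w,w\rangle_{L^2(\Omega)}-2d\operatorname{Im}\int_{\Gamma_1}\pnu w\,\overline{w_\Gamma}\,dS .
\]
On $\Gamma_1$ one has $\psi\equiv1$, hence $\nabla_\Gamma\varphi\equiv0$, so conjugating the dynamic boundary equation yields
\[
e^{-s\varphi}\tilde{L}_\Gamma(v,v_\Gamma)=i\pt w_\Gamma+is(\pt\varphi)\,w_\Gamma+d\,\pnu w+ds(\pnu\varphi)\,w_\Gamma-\delta\Delta_\Gamma w_\Gamma-\vec{\rho}_{\Gamma,1}\cdot\nabla_\Gamma w_\Gamma+\rho_{\Gamma,0}w_\Gamma ;
\]
solving for $i\pt w_\Gamma$ and repeating the same manipulation on $\Gamma_1$ (the surface divergence theorem removes $\int_{\Gamma_1}|\nabla_\Gamma w_\Gamma|^2$, and the $\pnu\varphi$- and $\rho_{\Gamma,0}$-terms are real) gives
\begin{multline*}
2\operatorname{Re}\langle\pt w_\Gamma,w_\Gamma\rangle_{L^2(\Gamma_1)}=2\operatorname{Im}\langle e^{-s\varphi}\tilde{L}_\Gamma(v,v_\Gamma),w_\Gamma\rangle_{L^2(\Gamma_1)}-2s\!\int_{\Gamma_1}\!(\pt\varphi)|w_\Gamma|^2\,dS\\
-2d\operatorname{Im}\!\int_{\Gamma_1}\!\pnu w\,\overline{w_\Gamma}\,dS+2\operatorname{Im}\langle\vec{\rho}_{\Gamma,1}\!\cdot\!\nabla_\Gamma w_\Gamma,w_\Gamma\rangle_{L^2(\Gamma_1)} .
\end{multline*}
Adding the two identities and integrating over $(-T,0)$ writes the left side of \eqref{Carleman:pi} (without the factor $s^{3/2}\lambda^{3/2}$) as a sum of five space–time integrals.

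It then remains to multiply through by $s^{3/2}\lambda^{3/2}$ and bound each of these five integrals by a constant times $\mathcal R$, where $\mathcal R$ denotes the right-hand side of \eqref{Carleman:pi} (which coincides with that of \eqref{Carleman:control}), uniformly in $s\ge s_0$, $\lambda\ge\lambda_0$. For this I would use Cauchy–Schwarz in $(x,t)$ together with the elementary bounds $\theta\ge T^{-2}$ on $\overline\Omega\times(-T,T)$, $|\pnu\varphi|\le C\lambda\theta$, $|\pt\varphi|\le C\theta^2$, which after comparison with the corresponding terms in the left-hand side of \eqref{Carleman:control} give, with $Q=\Omega\times(-T,T)$ and $\Sigma=\Gamma_1\times(-T,T)$,
\[
\|w\|_{L^2(Q)}^2\le\tfrac{C}{s^3\lambda^4}\mathcal R,\quad\|w_\Gamma\|_{L^2(\Sigma)}^2\le\tfrac{C}{s^3\lambda^3}\mathcal R,\quad\|\pnu w\|_{L^2(\Sigma)}^2\le\tfrac{C}{s\lambda}\mathcal R,\quad\|\nabla_\Gamma w_\Gamma\|_{L^2(\Sigma)}^2\le\tfrac{C}{s\lambda}\mathcal R,
\]
\[
\int_{-T}^{T}\!\!\int_{\Gamma_1}|\pt\varphi|\,|w_\Gamma|^2\,dS\,dt\le\tfrac{C}{s^3\lambda^3}\mathcal R,\qquad \|P_1w\|_{L^2(Q)}^2\le\mathcal R,\qquad \IGTT{e^{-2s\varphi}|\tilde{L}_\Gamma(v,v_\Gamma)|^2}\le\mathcal R
\]
(for $\pnu w$ one also uses $\pnu w=e^{-s\varphi}(\pnu v-s(\pnu\varphi)v)$). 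The term carrying $P_1w$ then contributes $O(\lambda^{-1/2})\mathcal R$, those carrying $\pnu w\,\overline{w_\Gamma}$, $(\pt\varphi)|w_\Gamma|^2$ and $\vec{\rho}_{\Gamma,1}\cdot\nabla_\Gamma w_\Gamma$ contribute $O((s\lambda)^{-1/2})\mathcal R$, and the remaining term $s^{3/2}\lambda^{3/2}\operatorname{Im}\langle e^{-s\varphi}\tilde{L}_\Gamma,w_\Gamma\rangle$ is of size exactly $O(1)\mathcal R$ — this last term is what fixes the exponent $3/2$ in the statement. Choosing $s_0,\lambda_0$ large enough closes the estimate.

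The steps I expect to demand care are: (i) the rigorous justification of the integrations by parts and of the identity $\int_\Omega|w(0)|^2=\int_{-T}^{0}\partial_t\|w\|^2_{L^2(\Omega)}\,dt$ for merely $(v,v_\Gamma)\in\mathcal V$, whence the preliminary density argument; and (ii) the coupling term $\int_{\Gamma_1}\pnu w\,\overline{w_\Gamma}\,dS$, which does \emph{not} cancel between the interior and boundary identities but is nevertheless harmless, because the global estimate \eqref{Carleman:control} simultaneously controls $s\lambda\theta e^{-2s\varphi}|\pnu v|^2$ and $s^3\lambda^3\theta^3 e^{-2s\varphi}|v|^2$ on $\Gamma_1$, whose geometric mean more than absorbs it after multiplication by $s^{3/2}\lambda^{3/2}$.
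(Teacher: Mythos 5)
Your proposal is correct and follows essentially the same route as the paper: express the $t=0$ energy as a time integral over $(-T,0)$ using $w(\cdot,-T)=0$ for $w=e^{-s\varphi}v$, rewrite $i\pt w$ through the conjugated operators, and absorb every resulting space--time integral with the global Carleman estimate \eqref{Carleman:control}, the factor $s^{3/2}\lambda^{3/2}$ being fixed by Cauchy--Schwarz against the zeroth-order Carleman terms. The only (harmless) deviation is that on $\Gamma_1$ you substitute the full conjugated operator $\tilde{L}_\Gamma$ for $i\pt w_\Gamma$, which brings in the extra $\pt\varphi$, $\pnu w$ and first-order tangential terms you then absorb, whereas the paper works with $\Im\int Q_1(w)\overline{w}$ directly so that the Laplace--Beltrami contribution drops out and only the clean identity $I_2=\tfrac12\int_{\Gamma_1}|w(0)|^2\,dS$ remains.
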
 

\begin{proof}
As usual, we argue by density, i.e., we consider $(v,v_\Gamma)\in C^\infty((\overline{\Omega}\times \Gamma_1)\times [0,T])$ such that $v=0$ on $\Gamma_0\times (0,T)$ and $v=v_\Gamma$ on $\Gamma_1\times (0,T)$. 
We shall prove that the terms 
\begin{align*}
    \int_\Omega e^{-2s\varphi(\cdot,0)}|v(\cdot,0)|^2\,dx\quad \text{ and } \quad \int_{\Gamma_1} e^{-2s\varphi}(\cdot,0)|v_\Gamma(\cdot,0)|\,dS,
\end{align*}
are bounded by above by  the left-hand side of \eqref{Carleman:control}. To do this, we consider 
		\begin{align*}
		I_1:=&\Im \int_{-T}^0 \int_\Omega P_1(w)\overline{w}\,dx\,dt,\quad I_2:=\Im \int_{-T}^0 \int_{\Gamma_1} Q_1(w)\overline{w}\,dS\, dt
        \end{align*}
        where $\Im$ denotes the imaginary part of a complex number and $w=e^{-s\varphi}v$. After integration by parts, 
        taking into account that  $w=0$ on $\Gamma_0\times (-T,T)$ and $w(\cdot,-T)=0$ in $\Omega$, we deduce that 
        \begin{align}
        \label{computations:I1}
		I_1=&\dfrac{1}{2}\int_\Omega |w(\cdot,0)|^2 \,dx +d\Im \int_{-T}^0 \int_{\Gamma_1} \pnu w \overline{w}\,dS\, dt.
		\end{align}
		
		On the other hand, using the Surface Divergence Theorem 
        \eqref{surface:divergence:theorem} 
        and the fact that $w(\cdot, -T)|_{\Gamma_1} = 0$,
        we see that 
		\begin{align}
        \label{computations:I2}
		I_2=&\dfrac{1}{2} \int_{\Gamma_1} |w(0)|^2 \,dS.
		\end{align}

        Then, adding \eqref{computations:I1} and \eqref{computations:I2}, multiplying by $\frac{1}{2}s^{3/2}\lambda^{3/2}$ and using Young's inequality, we obtain
		\begin{align}
		\label{estimate:wo}
		\begin{split} 
		&\dfrac{1}{2}s^{3/2}\lambda^{3/2}\left(\int_\Omega |w(0)|^2 dx + \int_{\Gamma_1} |w(0)|^2 dS \right)\\
		\leq & 2 \IOTT{(|P_1 (w)|^2 + s^3\lambda^3 |w|^2)}+2 \IGTT{(|Q_1(w)|^2+s^3\lambda^3 |w|^2)}\\
		&+2d \IGTT{(|\pnu w|^2 +s^3\lambda^3 |w|^2)}.
		\end{split}
		\end{align}
		
		By definitions of $w$ and $\varphi$, we see that 
		\begin{align}
		\label{estimate:pnu:w}
		\pnu w=s\lambda e^{-s\varphi} \theta \pnu \psi v + e^{-s\varphi} \pnu v.
		\end{align}
		
		Combining \eqref{Carleman:control}, \eqref{estimate:wo}, \eqref{estimate:pnu:w} and coming back to the original variable, we get 
		\begin{align}
		\nonumber 
		&s^{3/2}\lambda^{3/2}\left(\int_\Omega e^{-2s\varphi (0)}|v(0)|^2 dx + \int_{\Gamma_1} e^{-2s\varphi(0)}|v(0)|^2 dS \right)\\
		\nonumber 
		\leq & C \IOTT{(|P_1(e^{-s\varphi}v)|^2 +s^3\lambda^3 e^{-2s\varphi} |v|^2)}\\
		\nonumber 
		&+C \IGTT{\left( |Q_1(e^{-s\varphi}v)|^2 + s^3 \lambda^3 e^{-2s\varphi} \theta^2 |v|^2 +e^{-2s\varphi} |\pnu v|^2 \right)}\\
		\label{last:estimate:wo}
		\begin{split} 
		\leq & C \IOTT{e^{-2s\varphi} |L(v)|^2}+ C \IGTT{e^{-2s\varphi} |N(v)|^2}\\
		&+Cs\lambda \int_{-T}^T \int_{\Gamma_\star} e^{-2s\varphi} \theta |\pnu v|^2 \,dS\, dt,
		\end{split}
		\end{align}
		for all $s\geq s_0$ and $\lambda\geq \lambda _0$, where the constant $C$ depends on $m,T,d,\delta,\Omega$ and $\Gamma_1$. Finally, \eqref{Carleman:pi} follows easily from \eqref{Carleman:control} and \eqref{last:estimate:wo}. 
\end{proof}

%%%%%%%%%%%%%%%%%%%%%%%%%%%%%%%%%%%%%%%%%%%%%%%%%%%%%%%%%%%%%%%%%%%%%%%%%%%%%%%%%% 

\subsection{Proof of the main result}

Now, we are able to prove the Theorem \ref{Thm:stability:boundary}.
\begin{proof}
 We fix $(q,q_\Gamma)\in \mathbb{L}_{\leq m}^\infty$ and  define 
  \begin{align}
  \label{def:u:u:gamma}
  u=y[p,p_\Gamma]-y[q,q_\Gamma],\text{ and } u_\Gamma=y_\Gamma[p,p_\Gamma]-y_\Gamma[q,q_\Gamma]
  \end{align}
  where $y[p,p_\Gamma]$ and $y[q,q_\Gamma]$ are the solutions of \eqref{original:problem:01} associated to the potentials $(p,p_\Gamma)$ and $(q,q_\Gamma)$, respectively. We also define
  \begin{align}
    \label{def:f:f:gamma}
f=q-p,\quad f_\Gamma= q_\Gamma - p_\Gamma,\quad R=y[q,q_\Gamma]\quad\text{ and }\quad R_\Gamma=y_\Gamma[q,q_\Gamma]. 
  \end{align}
  Then, $(u,u_\Gamma)$ satisfies 
 	\begin{align*}
 	\begin{cases}
 	L(u) +p(x)u=f(x)R(x,t),&\text{ in }\Omega\times (0,T),\\
 	L_\Gamma(u,u_\Gamma)+p_\Gamma(x)u_\Gamma= f_\Gamma(x) R_\Gamma(x,t),&\text{ on }\Gamma_1\times (0,T),\\
 	u\big|_{\Gamma_1}=u_\Gamma,&\text{ on }\Gamma_1\times (0,T),\\
 	u\big|_{\Gamma_0}=0,&\text{ on }\Gamma_0\times (0,T),\\
 	(u,u_\Gamma)(\cdot, 0)=(0,0),&\text{ in }\Omega\times \Gamma_1.
 	\end{cases}
 	\end{align*} 
 	
 	Next, we set $(w,w_\Gamma)=(\pt u,\pt u_\Gamma)$. Then, $(w,w_\Gamma)$ satisfies
 	\begin{align*}
 	\begin{cases} 
 	L(w) +p(x)w=f(x)\pt R(x,t),&\text{ in }\Omega\times (0,T),\\
 	L_\Gamma(w,w_\Gamma)+p_\Gamma (x)w_\Gamma =f_\Gamma(x) \pt R_\Gamma(x,t),&\text{ on }\Gamma_1\times (0,T),\\
 	w\big|_{\Gamma_1}=w_\Gamma,&\text{ on }\Gamma_1\times (0,T),\\
 	w\big|_{\Gamma_0}=0,&\text{ on }\Gamma_0\times (0,T),\\
 	(w,w_\Gamma)(\cdot,0)=(-if(\cdot)R(\cdot,0),-i f_\Gamma(\cdot) R_\Gamma (\cdot,0)),&\text{ in }\Omega\times \Gamma_1. 
 	\end{cases} 
 	\end{align*}
 	
 	Now, we define
  \begin{align*}
      z(x,t)=\begin{cases}
          w(x,t)&\text{ if }(x,t)\in \Omega\times [0,T),\\
          - \overline w(x,-t)&\text{ if }(x,t)\in \Omega\times (-T,0).
      \end{cases}
  \end{align*}
  
We also extend $R$ by $ R(x,t) = \overline R(x,-t)$ for $t<0$, and in an
   analogous  way for $ R_\Gamma$. We denote these extensions by the same symbols. 
   Since $f$ and $R(\cdot,  0)$ are real valued, we have that $w$ satisfies the corresponding 
   system posed in $(-T,T)$.
  In particular, this implies that $(z,z_\Gamma)\in C([-T,T];\mathbb{H}_{\Gamma_0}^1)$ and $\pnu z\in L^2(\partial \Omega\times (-T,T))$. 
  Thus, by regularity assumption \ref{regularity:stability} we deduce that
    \begin{align}
    \label{regularity:R:R:gamma}
    (R,R_\Gamma) \in H^1(-T,T;\mathbb{L}^\infty)
    \end{align}
  	
 	Now, applying Theorem \ref{Thm:Carleman:ip} to $(z,z_\Gamma)$ we obtain
 	\begin{align}
 	\label{estimate:fR:01}
 	\begin{split} 
 	&s^{3/2}\lambda^{3/2}\int_\Omega e^{-2s\varphi(0)}|f R(0)|^2 dx + s^{3/2}\lambda^{3/2} \int_{\Gamma_1}e^{-2s\varphi(0)} |f_\Gamma R_\Gamma(0)|^2 dS\\
 	\leq & C \IOTT{e^{-2s\varphi}|f\pt R|^2} + C \IGTT{e^{-2s\varphi}|f\pt R_\Gamma|^2}\\
 	&+Cs\lambda \int_{-T}^T \int_{\Gamma_\star} e^{-2s\varphi} \theta|\pnu z|^2\, dS\, dt.
 	\end{split} 
 	\end{align}
 	
 	On the other hand, by \eqref{condition:initial:datum:stability} we have
    \begin{align}
    \label{positivity:R:R:gamma}
    |R(\cdot,0)|\geq r_0 >0,\text{ a.e. in }\Omega,\quad  |R_\Gamma (\cdot,0)|\geq r_0>0\text{ a.e on }\Gamma_1.
    \end{align}

Then, from \eqref{regularity:R:R:gamma}  we deduce the existence of $g_\Omega,g_{\Gamma_1}\in L^2(0,T)$ such that 
 	\begin{align}
 	\label{estimate:fR:02}
 	\begin{split} 
 	|\pt R(x,t)|\leq g_\Omega(t)|R(x,0)|,&\quad \forall\, (x,t)\in \Omega\times (-T,T),\\
 	|\pt R_\Gamma(x,t)|\leq g_{\Gamma_1}(t)|R_\Gamma(x,0)|,&\quad \forall (x,t)\in \Gamma_1\times (-T,T).
 	\end{split}
 	\end{align}

 	Moreover, we see that 
 	\begin{align*}
 	-\varphi(x,t)\leq -\varphi(x,0),\quad \forall x\in \overline{\Omega}\times (-T,T).
 	\end{align*}
 	
 	Then, combining \eqref{estimate:fR:01} and \eqref{estimate:fR:02}, we obtain
 	\begin{align}
 	\label{estimate:fR:03}
 	\begin{split} 
 	&s^{3/2}\lambda^{3/2}\int_\Omega e^{-2s\varphi(0)}|f R(0)|^2 dx + s^{3/2}\lambda^{3/2} \int_{\Gamma_1}e^{-2s\varphi(0)} |f_\Gamma R_\Gamma(0)|^2 dS\\
 	\leq & C K \IOT{e^{-2s\varphi(0)}|f|^2 |R(0)|^2} + CK \IGT{e^{-2s\varphi(0)}|f_\Gamma|^2 |R_\Gamma(0)|^2}\\
 	&+Cs\lambda \int_0^T \int_{\Gamma_\star} e^{-2s\varphi}\theta |\pnu z|^2\, dS\, dt,
 	\end{split}
 	\end{align}
 	where the constant $K>0$ satisfies 
 	\begin{align*}
 	\int_0^T |g_\Omega|^2\, dt\leq K,\quad  \int_0^T |g_{\Gamma_1}|^2\, dt\leq K.
 	\end{align*}
 	
 	Thus, taking $s>0$ and $\lambda>0$ large enough if it is necessary we can absorb the first two terms of \eqref{estimate:fR:03}. Additionally, by \eqref{positivity:R:R:gamma} we deduce that 
 	\begin{align}
    \label{estimate:f:f:gamma:final}
 	\begin{split}
 	&s^{3/2}\lambda^{3/2}\int_\Omega e^{-2s\varphi(0)}|f|^2 dx + s^{3/2}\lambda^{3/2} \int_{\Gamma_1}e^{-2s\varphi(0)} |f_\Gamma|^2 dS\\
 	\leq & Cs\lambda \int_0^T \int_{\Gamma_\star} e^{-2s\varphi}\theta |\pnu z|^2\, dS\, dt,
 	\end{split} 
 	\end{align}
 	for all $s\geq s_0$ and $\lambda\geq \lambda_0$. Finally, we fix $s>0$ and $\lambda>0$ and we come back to the original variables in \eqref{def:u:u:gamma} and \eqref{def:f:f:gamma}. Finally, since the weights $\varphi$ and $\theta$ are bounded, we deduce that
 	\begin{align*}
 	\int_\Omega |p-q|^2 dx + \int_{\Gamma_1} |p_\Gamma -q_\Gamma|^2 dS \leq C \int_0^T\int_{\Gamma_\star} |\pnu (\pt y[p,p_\Gamma]-\pt y[q,q_\Gamma])|^2 \,dS \, dt,
 	\end{align*} 
 	and the proof of Theorem \ref{Thm:stability:boundary} is complete.
\end{proof}    
%%%%%%%%%%%%%%%%%%%%%%%%%%%%%%%%%%%%%%%%%%%%%%%%%%%%%%%%%%%%
    
%\section{Proof of Theorem \ref{Thm:properties:J}}
%    \label{section:Thm:properties:J}
%    In this section, we will prove the main properties of the functional $J$ defined in \eqref{def:J}, i.e., the Theorem \ref{Thm:properties:J}. 
 	
%%%%%%%%%%%%%%%%%%%%%%%%%%%%%%%%%%%%%%%%%%%%%%%%%%%%%%%%%%%%%%%%%%%%%%%%%%%%%%%%%%%

\section{Convergence results of CbRec}
\label{section:convergence:CbRec}
Now, we shall give an algorithm to reconstruct the potentials $(p,p_\Gamma)$ on \eqref{original:problem:01}. In order to do that, we introduce an auxiliary functional based on the weights \eqref{weights:theta:varphi} and the Carleman estimate obtained in Theorem \ref{Thm:Carleman:ip}. 

\subsection{An auxiliary functional and some properties}

We fix $(\zeta,\zeta_\Gamma)\in \mathbb{L}^2$, $h\in L^2(0,T;L^2(\Gamma_\ast))$ and choose $s_0>0$ according to Theorem \ref{Thm:Carleman:ip}. Then, for all $s\geq s_0$, we introduce the functional 
\begin{align}
\begin{split} 
\label{def:J}
J[\zeta,\zeta_\Gamma,h](u,u_\Gamma):=&\dfrac{1}{2s}\IOT{e^{-2s\varphi} |N(u)-\zeta|^2}+\dfrac{1}{2s}\IGT{e^{-2s\varphi}|N_\Gamma(u,u_\Gamma)-\zeta_\Gamma|^2}\\
&+\dfrac{1}{2}\int_0^T\int_{\Gamma_\star}{e^{-2s\varphi} |\pnu u-h|^2 }\,dS\, dt,\quad \forall\, (u,u_\Gamma)\in \mathcal{W},
\end{split}
\end{align}
where $N(u)=L(u)+p(x)u$ and $N(u,u_\Gamma)=L_\Gamma(u,u_\Gamma)+p_\Gamma(x)u_\Gamma$, with $L$ and $L_\Gamma$ defined in \eqref{def:operator:L} and \eqref{def:operator:L:gamma} and
\begin{align*}
\mathcal{W}:=\left\{ (y,y_\Gamma)\in L^2(0,T;\mathbb{H}_{\Gamma_0}^1)\,:\, (N(y),N_\Gamma (y,y_\Gamma))\in L^2(0,T;\mathbb{L}^2),\, \pnu y\in L^2(0,T;L^2(\Gamma_\star))\right\},
\end{align*}
endowed by the norm
\begin{align}
\label{def:norm:W}
\begin{split} 
\|(u,u_\Gamma)\|_{\mathcal{W}}^2 :=&\dfrac{1}{s}\IOT{e^{-2s\varphi} |N(u)|^2}+ \dfrac{1}{s}\IGT{e^{-2s\varphi} |N_\Gamma(u,u_\Gamma)|^2}\\
&+\int_0^T\int_{\Gamma_\ast}{e^{-2s\varphi}|\pnu u|^2\, dS\, dt}.
\end{split}
\end{align}

We point out that $J$ depends on the parameter $s>s_0$. However, to simply notation, we just write $J$ instead of $J_s$. From now on, we shall consider the unconstrained problem 
\begin{align}
    \label{problem:min:J}
    \begin{cases}
        \text{Minimize }&J[\zeta,\zeta_\Gamma,h](u,u_\Gamma)\\
        \text{Subject to }&(u,u_\Gamma)\in \mathcal{W}.
    \end{cases}
\end{align}
\begin{theorem}\label{Thm:properties:J}
	For $(\zeta,\zeta_\Gamma)\in \mathbb{L}^2$, $h\in L^2(0,T;L^2(\Gamma_\star))$ and $s_0$ given by Theorem \ref{Thm:Carleman:ip}, we consider the functional $J$ defined on $\mathcal{W}$, for all $s\geq s_0$. Then,
	\begin{enumerate}[(a)]
		\item The problem \eqref{problem:min:J} admits a unique minimizer $(u^*,u_\Gamma^*)\in \mathcal{W}$. Moreover, there exists a constant $C>0$ independent of $s$ such that
		\begin{align}
		\label{estimate:yast}
		\begin{split} 
		\|(u^*,u_\Gamma^*)\|_{\mathcal{W}}^2 \leq &\dfrac{C}{s} \IOT{e^{-2s\varphi}|\zeta|^2}+\dfrac{C}{s} \IGT{e^{-2s\varphi}|\zeta _\Gamma|^2}\\
		&+C\int_0^T\int_{\Gamma_\star}{e^{-2s\varphi}|h|^2}\,dS \, dt.
		\end{split}
		\end{align}
		\item The minimizer $(u^*,u_\Gamma^*)\in \mathcal{W}$ satisfies the associated Euler-Lagrange equation
		\begin{align}
        \label{EL:eq}
		\begin{split}
		&\dfrac{1}{s}\Re \IOT{e^{-2s\varphi} (N(u^*)-\zeta)\overline{N(u)}} + \dfrac{1}{s}\Re \IGT{e^{-2s\varphi} (N_\Gamma(u^*,u_\Gamma^*)-\zeta_\Gamma)\overline{N_\Gamma(u,u_\Gamma)}}\\
		&+\Re \int_0^T\int_{\Gamma_\star}{e^{-2s\varphi} (\pnu u^*-h)\overline{\pnu u}}\,dS\, dt=0, \quad \forall (u,u_\Gamma)\in \mathcal{W},
		\end{split}
		\end{align}
		where $\Re$ denotes the real part of a complex number. 
		\item If $(\zeta^j,\zeta_\Gamma^j)\in L^2(0,T;\mathbb{L}^2)$, $h\in L^2(0,T;L^2(\Gamma_\star))$ and $(u^{*,j},u_\Gamma^{*,j})\in \mathcal{W}$ is the corresponding minimizer of $J[\zeta^j,\zeta_\Gamma^j,h]$ for $j\in \{a,b\}$, then we have the following estimate
		\begin{align}
		\label{estimate:ya:yb}
		\begin{split} 
		&s^{3/2}\int_\Omega e^{-2s\varphi(\cdot,0)}|(u^{*,a}-u^{*,b})(\cdot,0)|^2\,dx+s^{3/2}\int_{\Gamma_1}e^{-2s\varphi(\cdot,0)}|(u^{*,a}_\Gamma - u^{*,b}_\Gamma)(\cdot,0)|^2 \,dS\\
		\leq &C\IOT{e^{-2s\varphi}|\zeta^a-\zeta^b|^2}+ C\IGT{e^{-2s\varphi}|\zeta_\Gamma^a - \zeta_{\Gamma}^b|^2},
		\end{split}
		\end{align}
		for some constant $C>0$ independent of $s$.
	\end{enumerate}
\end{theorem}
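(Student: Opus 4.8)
\emph{Proof strategy and parts (a), (b).} The plan is to regard $J$ as a quadratic functional on the Hilbert space $(\mathcal{W},\|\cdot\|_{\mathcal{W}})$: parts (a) and (b) are then soft convex analysis, while the Carleman estimate of Theorem~\ref{Thm:Carleman:ip} enters only in part (c), which carries the real content. First, the purely quadratic part of $J$ in $(u,u_\Gamma)$ is exactly $\tfrac12\|(u,u_\Gamma)\|_{\mathcal{W}}^2$, so $J$ is continuous and strictly convex; applying $|a-b|^2\ge\tfrac12|a|^2-|b|^2$ to each of its three terms gives
\begin{align*}
J[\zeta,\zeta_\Gamma,h](u,u_\Gamma)\ \ge\ \tfrac14\|(u,u_\Gamma)\|_{\mathcal{W}}^2-J[\zeta,\zeta_\Gamma,h](0,0),\qquad\forall\,(u,u_\Gamma)\in\mathcal{W},
\end{align*}
so $J$ is coercive. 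By the direct method of the calculus of variations (or Lax--Milgram, since the quadratic part of $J$ is the square of the $\mathcal{W}$-norm) there is a unique minimizer $(u^*,u^*_\Gamma)$ of \eqref{problem:min:J}; evaluating the coercivity inequality at $(u^*,u^*_\Gamma)$ and using $J(u^*,u^*_\Gamma)\le J(0,0)$ yields $\|(u^*,u^*_\Gamma)\|_{\mathcal{W}}^2\le8\,J(0,0)$, which is \eqref{estimate:yast} with an $s$-independent constant. For (b), $J$ is Fréchet differentiable, so the minimizer is characterised by $\frac{d}{d\tau}J((u^*,u^*_\Gamma)+\tau(u,u_\Gamma))\big|_{\tau=0}=0$ for all $(u,u_\Gamma)\in\mathcal{W}$, which after expanding the squares is \eqref{EL:eq}; since $N$, $N_\Gamma$ and $\pnu$ are $\mathbb{C}$-linear, replacing $(u,u_\Gamma)$ by $(iu,iu_\Gamma)$ shows \eqref{EL:eq} also holds with the $\Re$ removed, a fact used below.

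\emph{Part (c).} Writing \eqref{EL:eq} for the data $(\zeta^j,\zeta^j_\Gamma,h)$, $j\in\{a,b\}$, and subtracting, the common flux datum $h$ cancels, so with $(U,U_\Gamma):=(u^{*,a}-u^{*,b},u^{*,a}_\Gamma-u^{*,b}_\Gamma)\in\mathcal{W}$,
\begin{align*}
&\tfrac1s\IOT{e^{-2s\varphi}\bigl(N(U)-(\zeta^a-\zeta^b)\bigr)\overline{N(u)}}+\tfrac1s\IGT{e^{-2s\varphi}\bigl(N_\Gamma(U,U_\Gamma)-(\zeta^a_\Gamma-\zeta^b_\Gamma)\bigr)\overline{N_\Gamma(u,u_\Gamma)}}\\
&\qquad+\int_0^T\int_{\Gamma_\star}e^{-2s\varphi}\,\pnu U\,\overline{\pnu u}\,dS\,dt\ =\ 0,\qquad\forall\,(u,u_\Gamma)\in\mathcal{W}.
\end{align*}
Testing with $(u,u_\Gamma)=(U,U_\Gamma)$ and absorbing the two cross terms by Cauchy--Schwarz and Young's inequality gives
\begin{align*}
\|(U,U_\Gamma)\|_{\mathcal{W}}^2\ \le\ \tfrac{C}{s}\IOT{e^{-2s\varphi}|\zeta^a-\zeta^b|^2}+\tfrac{C}{s}\IGT{e^{-2s\varphi}|\zeta^a_\Gamma-\zeta^b_\Gamma|^2}.
\end{align*}
Now $(U,U_\Gamma)$ solves a Schrödinger system of the form \eqref{original:problem:01} on $(0,T)$ with right-hand side $(\tilde g,\tilde g_\Gamma):=(N(U),N_\Gamma(U,U_\Gamma))\in L^2(0,T;\mathbb{L}^2)$ and homogeneous Dirichlet condition on $\Gamma_0$. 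As in the proof of Theorem~\ref{Thm:stability:boundary}, I would extend it to $(-T,T)$ by the odd--conjugate reflection $Z(x,t)=-\overline{U(x,-t)}$, $Z_\Gamma(x,t)=-\overline{U_\Gamma(x,-t)}$ for $t<0$; using that $p$, $p_\Gamma$ and the drift coefficients are real, $(Z,Z_\Gamma)$ solves the corresponding system on $(-T,T)$ with source the analogous reflection of $(\tilde g,\tilde g_\Gamma)$, belongs to $\mathcal{V}$, and satisfies $(Z,Z_\Gamma)(\cdot,0)=(U,U_\Gamma)(\cdot,0)$. Applying the Carleman estimate \eqref{Carleman:pi} to $(Z,Z_\Gamma)$, and using the evenness of $\varphi$ and $\theta$ in $t$, the boundedness of $e^{-2s\varphi}$ and $e^{-2s\varphi}\theta$ on $\overline{\Omega}\times[0,T]$ for fixed $s,\lambda$, the identity \eqref{def:norm:W} (so that $\tfrac1s\IOT{e^{-2s\varphi}|\tilde g|^2}$, $\tfrac1s\IGT{e^{-2s\varphi}|\tilde g_\Gamma|^2}$ and $\int_0^T\int_{\Gamma_\star}e^{-2s\varphi}|\pnu U|^2\,dS\,dt$ are all $\le\|(U,U_\Gamma)\|_{\mathcal{W}}^2$), and finally the displayed bound on $\|(U,U_\Gamma)\|_{\mathcal{W}}^2$, one gets
\begin{align*}
s^{3/2}\lambda^{3/2}\Bigl(\int_\Omega e^{-2s\varphi(\cdot,0)}|U(\cdot,0)|^2\,dx+\int_{\Gamma_1}e^{-2s\varphi(\cdot,0)}|U_\Gamma(\cdot,0)|^2\,dS\Bigr)\le C\Bigl(\IOT{e^{-2s\varphi}|\zeta^a-\zeta^b|^2}+\IGT{e^{-2s\varphi}|\zeta^a_\Gamma-\zeta^b_\Gamma|^2}\Bigr);
\end{align*}
fixing $\lambda=\lambda_0$ then gives \eqref{estimate:ya:yb} with a constant independent of $s$.

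\emph{Main obstacle.} The only delicate step is the reflection in part (c): one must check that the extension $(Z,Z_\Gamma)$ genuinely belongs to $\mathcal{V}$, the non-obvious point being that it matches continuously across $t=0$ (so that $\tilde L(Z)$ and $\tilde L_\Gamma(Z,Z_\Gamma)$ have no singular part there) and that the reflected equation has an $L^2$ source with the correct real-coefficient structure; this is exactly where reality of the potentials is exploited, and is the analogue of the passage ``$w$ satisfies the corresponding system posed in $(-T,T)$'' (with $w(\cdot,0)=-if R(\cdot,0)$ purely imaginary) in the proof of Theorem~\ref{Thm:stability:boundary}. Everything else --- coercivity, Fréchet differentiability, the testing argument, and the bookkeeping converting the Carleman right-hand side into $\|(U,U_\Gamma)\|_{\mathcal{W}}^2$ --- is routine; one should, however, also record once and for all that \eqref{def:norm:W} is a genuine norm and that $\mathcal{W}$ is complete, which again follows from the uniqueness content of the Carleman estimate.
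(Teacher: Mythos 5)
Your parts (a) and (b), and the first half of part (c) --- subtracting the two Euler--Lagrange identities \eqref{EL:eq}, noting that the common datum $h$ cancels, testing with the difference $(U,U_\Gamma)=(u^{*,a}-u^{*,b},u^{*,a}_\Gamma-u^{*,b}_\Gamma)$ and absorbing the cross terms by Young's inequality to reach the bound on $\|(U,U_\Gamma)\|_{\mathcal{W}}^2$ (the paper's \eqref{eq:asdf}) --- follow the paper's proof essentially verbatim, up to harmless constants in the coercivity estimate.

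The gap is in the last step of (c). You correctly notice what the paper glosses over: \eqref{Carleman:pi} is stated for $(v,v_\Gamma)\in\mathcal{V}$, i.e.\ on $(-T,T)$, whereas $(U,U_\Gamma)\in\mathcal{W}$ lives only on $(0,T)$, so some extension is required before the Carleman estimate can be invoked. But the odd--conjugate reflection $Z(x,t)=-\overline{U(x,-t)}$ you propose is continuous across $t=0$ only when $U(\cdot,0)$ is purely imaginary, since $Z(x,0^-)=-\overline{U(x,0)}=U(x,0)$ forces $\Re U(x,0)=0$. In the proof of Theorem \ref{Thm:stability:boundary} this is available because the reflected function has trace $-ifR(\cdot,0)$ at $t=0$ with $f$ and $R(\cdot,0)$ real; here $(U,U_\Gamma)$ is the difference of two abstract minimizers of \eqref{problem:min:J}, which are not solutions of a Cauchy problem with prescribed data, and nothing in the Euler--Lagrange characterization constrains $\Re U(\cdot,0)$. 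The even--conjugate reflection has the symmetric defect (it requires $\Im U(\cdot,0)=0$), and conjugation under $t\mapsto-t$ is forced by the operator $i\partial_t+d\Delta$, so no choice of sign repairs this. Hence your $(Z,Z_\Gamma)$ need not belong to $\mathcal{V}$: $\tilde{L}(Z)$ in general carries a singular part on $\{t=0\}$, and the application of \eqref{Carleman:pi} is not justified as written. Closing the step requires either a version of Theorems \ref{thm:Carleman:Schr:Mercado:Morales}--\ref{Thm:Carleman:ip} valid directly for elements of $\mathcal{W}$ on $(0,T)$ (note that the identity for $I_1$ in the proof of Theorem \ref{Thm:Carleman:ip} already captures the $t=0$ slice from one side of the time interval, but the underlying estimate \eqref{Carleman:control} would still have to be re-derived on $(0,T)$, where the time boundary terms at $t=0$ no longer vanish), or additional structural information on the traces of the minimizers at $t=0$. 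You flagged the right obstacle, but the fix you sketch does not supply either ingredient.
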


\begin{proof}
    \begin{enumerate}[(a)]
 		\item Clearly, the functional $J$ is clearly continuous and strictly convex. In order to prove that $J$ is coercive, we point out that
 		\begin{align}
 		\nonumber 
 		&J[\zeta,\zeta_\Gamma,h](y,y_\Gamma)\\
 		\label{estimate:coercive:01}
 		\begin{split} 
 		=&\dfrac{1}{2}\|(y,y_\Gamma)\|_{\mathcal{W}}^2+\dfrac{1}{2s}\IOT{e^{-2s\varphi}|\zeta|^2}+\dfrac{1}{2s}\IGT{e^{-2s\varphi}|\zeta_\Gamma|^2}\\
 		&+\dfrac{1}{2}\int_0^T\int_{\Gamma_\star}{e^{-2s\varphi}|h|^2}dS dt
 		-\dfrac{1}{s}\Re \IOT{e^{-2s\varphi}\zeta \overline{N(y)}}\\
 		&-\dfrac{1}{s}\Re \IGT{e^{-2s\varphi}\zeta_\Gamma \overline{N_\Gamma(y,y_\Gamma)}}-\Re \int_0^T\int_{\Gamma_\star}{e^{-2s\varphi}\pnu y \overline{h}}dS dt
 		\end{split}
 		\\
 		\label{estimate:coercive:02}
 		\begin{split} 
 		\geq &\dfrac{1}{4}\|(y,y_\Gamma)\|_\mathcal{W}^2 -\dfrac{1}{s}\IOT{e^{-2s\varphi}|\zeta|^2}-\dfrac{1}{s}\IGT{e^{-2s\varphi}|\zeta_\Gamma|^2}\\
 		&-\int_0^T\int_{\Gamma_\ast}{e^{-2s\varphi}|h|^2}\,dS\, dt,
 		\end{split} 
 		\end{align}
 		where in \eqref{estimate:coercive:02} we have used Young's inequality in the last step and $\|\cdot\|_{\mathcal{W}}$ is given in \eqref{def:norm:W}. This implies that $J$ is coercive and therefore $J$ admits a unique minimizer $(y^*,y_\Gamma^*)\in \mathcal{W}$. In order to prove \eqref{estimate:yast}, since $J[\zeta,\zeta_\Gamma,h](y^*,y_\Gamma^*)\leq J[\zeta,\zeta_\Gamma,h](0,0)$ and \eqref{estimate:coercive:01} we have 
 		\begin{align*}
 		\begin{split} 
 		&\dfrac{1}{2}\|(y^*,y_\Gamma^*)\|_\mathcal{W}^2\\
 		\leq &\dfrac{1}{s}\Re \IOT{e^{-2s\varphi}\zeta \overline{N(y^*)}}+\dfrac{1}{s}\Re \IGT{e^{-2s\varphi}\zeta_\Gamma \overline{N_\Gamma(y^*,y_\Gamma^*)}}\\
 		&+\dfrac{1}{s}\Re \int_0^T\int_{\Gamma_\star}{e^{-2s\varphi}h \overline{\pnu y^*}}\,dS\, dt.
 		\end{split}
 		\end{align*}
 		Then, by Young's inequality, for all $\epsilon>0$, there exists a constant $C=C(\epsilon)$ such that 
 		\begin{align*}
 		&\dfrac{1}{2}\|(y^*,y_\Gamma^*)\|_\mathcal{W}\\
 		\leq & \dfrac{C}{s} \IOT{e^{-2s\varphi}|\zeta|^2}+ \dfrac{C}{s}\IGT{e^{-2s\varphi}|\zeta_\Gamma|^2} \\
 		&+\int_0^T\int_{\Gamma_\star}{e^{-2s\varphi}|h|^2}\,dS\, dt + \epsilon \|(y^*,y_\Gamma^*)\|_\mathcal{W}^2.
 		\end{align*}
 		Thus, taking $\epsilon>0$ small enough we conclude the proof of \eqref{estimate:yast}.
 		\item Direct from the fact that $J$ has a unique minimizer.
 		\item By (b), we have the following inequalities:
 		\begin{align}
 		\label{prop:c:01}
 		\begin{split}
 		&\dfrac{1}{s}\Re \IOT{e^{-2s\varphi} (N(y^{*,a})-\zeta^a)\overline{N(y)}}+\dfrac{1}{s}\Re \IGT{e^{-2s\varphi} (N_\Gamma(y^{*,a})-\zeta_\Gamma^a)\overline{N_\Gamma(y,y_\Gamma)}}\\
 		&+\Re \int_0^T\int_{\Gamma_\star}{e^{-2s\varphi}(\pnu y^{*,a}-h)\overline{\pnu y}}\,dS\, dt=0,\quad \forall (y,y_\Gamma)\in \mathcal{W},
 		\end{split}
 		\end{align}
 		and
 		\begin{align}
 		\label{prop:c:02}
 		\begin{split}
 		&\dfrac{1}{s}\Re \IOT{e^{-2s\varphi} (N(y^{*,b})-\zeta^b)\overline{N(y)}}+\dfrac{1}{s}\Re \IGT{e^{-2s\varphi} (N_\Gamma(y^{*,b})-\zeta_\Gamma^b)\overline{N_\Gamma(y,y_\Gamma)}}\\
 		&+\Re \int_0^T\int_{\Gamma_\star}{e^{-2s\varphi}(\pnu y^{*,b}-h)\overline{\pnu y}}\,dS\, dt=0,\quad \forall (y,y_\Gamma)\in \mathcal{W}.
 		\end{split}
 		\end{align}
 		Subtracting \eqref{prop:c:01} and \eqref{prop:c:02} and taking $(y,y_\Gamma)=(y^{*,a}-y^{*,b},y_\Gamma^{*,a}-y_\Gamma^{*,b})$, we deduce that 
 		\begin{align*}
 		\|(y,y_\Gamma)\|_\mathcal{W}^2=&\dfrac{1}{s}\Re \IOT{e^{-2s\varphi} (\zeta^a-\zeta^b)\overline{N(y)}}\\
 		&+ \dfrac{1}{s}\Re \IGT{e^{-2s\varphi}(\zeta_\Gamma^a-\zeta_\Gamma^b)\overline{N_\Gamma(y,y_\Gamma)}},
 		\end{align*}
 		and by Young's inequality it is easy to see that
 		\begin{align}
 		\label{eq:asdf}
 		\|(y,y_\Gamma)\|_\mathcal{W}^2 \leq \dfrac{C}{s}\IOT{e^{-2s\varphi}|\zeta^a -\zeta^b|^2}+\dfrac{C}{s}\IGT{e^{-2s\varphi}|\zeta_\Gamma^a -\zeta_\Gamma^b|^2}.
 		\end{align}
 		Finally, combining \eqref{Carleman:pi} and \eqref{eq:asdf}, we get \eqref{estimate:ya:yb}. This concludes the proof of Theorem \ref{Thm:properties:J}.
 	\end{enumerate}
\end{proof}

\subsection{A CbRec type algorithm}
%Thanks to the results of Theorem \ref{Thm:properties:J}, 
We now present an algorithm designed to reconstruct the potentials  \( p \) and \( p_{\Gamma} \). To do this, we fix $m>0$ and define the space $\mathbb{L}_{\leq m}^\infty$ defined in \eqref{def:L:infty:m}. Then, the algorithm reads as follows:

\begin{algorithm}[!h]%[!htbp]
\caption{Reconstruction algorithm for coefficients $p$ and $p_\Gamma$}\label{alg:cap}
\label{Algorithm:reconstruction}
\textbf{Initialization:} 
\begin{itemize}
\item $(p^0, p_{\Gamma}^0) = (0, 0)$, or any guess $(p^0, p_{\Gamma}^0) \in \mathbb{L}_{\leq m}^\infty $.
\end{itemize}
\textbf{Iteration:} From $k$ to $k+1$.
\begin{itemize}
    \item Step 1: Given $(p^k, p_\Gamma^k)$, we set
    \begin{alignat*}{1}
    h^k=\pt \left(\pnu y[p^k,p_\Gamma^k]-\pnu y[p,p_\Gamma] \right)
    \end{alignat*}
    where $(y,y_\Gamma)[p^k,p^k_\Gamma]$ and $(y,y_\Gamma)[p,p_\Gamma])$ are the solutions of \eqref{original:problem:01} associated to the potentials $(p^k,p^k_\Gamma)$ and $(p,p_\Gamma)$, respectively.
    \item Step 2: Find the minimizer $(u^{\ast,k},u_\Gamma^{\ast,k})$ of the unconstrained problem
    \begin{align*}
\begin{cases} 
        \text{Minimize }&J[0,0,h^k](u,u_\Gamma),\\
\text{Subject to }&(u,u_\Gamma)\in \mathcal{W}
\end{cases}
    \end{align*}
    \item Step 3: Set
    \begin{alignat}{1}
\label{def:pk1}
\tilde{p}^{k+1} = p^k + \frac{\partial_t u^{\ast,k} (0)}{y_0} \quad \text{in } \Omega,\quad 
    \tilde{p}_{\Gamma}^{k+1} = p_{\Gamma}^k + \frac{\partial_t u_{\Gamma}^{\ast,k} (\cdot,0)}{y_{\Gamma, 0}} \quad \text{in } \Gamma_1,
    \end{alignat}
    \item Step 4: Finally, consider $p^{k+1}=\mathcal{T}(\tilde{p}^{k+1})$ and $p_\Gamma^{k+1}=\mathcal{T}_\Gamma(\tilde{p}_\Gamma ^{k+1})$, where the operators $\mathcal{T}$ and $\mathcal{T}_\Gamma$ are given by
    \begin{align*}
        \mathcal{T}(p):=\begin{cases}
            p&\text{ if }\lvert p\rvert \leq m,\\
            m \dfrac{p}{\lvert p\rvert} 
            &\text{ if }\lvert  p\rvert >m
        \end{cases}
        \text{ and }
        \mathcal{T}_\Gamma(p_\Gamma):=\begin{cases}
            p_\Gamma&\text{ if }\lvert p_\Gamma \rvert \leq m,\\
             m  \dfrac{p_{\Gamma}}{\lvert p_\Gamma\rvert}
             &\text{ if }\lvert p_\Gamma\rvert>m.
        \end{cases}
    \end{align*}
\end{itemize}	
\end{algorithm}	

\FloatBarrier

%\newpage 
Using  Theorem \ref{Thm:properties:J}, in the next result  we prove the convergence of the Algorithm \ref{Algorithm:reconstruction}.
\begin{theorem}
    \label{Thm:convergence:Algorithm:01}
    Assume the hypotheses of Theorem \ref{Thm:properties:J}. Additionally, suppose that
    \begin{align*}
        (u[p,p_\Gamma],u_\Gamma[p,p_\Gamma])\in H^2(0,T;\mathbb{L}^\infty).
    \end{align*}

    Let $m>0$, $(p,p_\Gamma)\in \mathbb{L}_{\leq m}^\infty$ and for each $k\in \mathbb{N}$, consider $(p^k,p_\Gamma^k)\in \mathbb{L}_{\leq m}^\infty$. Then, there exist a constant $C_0>0$ and $s_0>0$ such that for all $s\geq s_0$ and $k\in \mathbb{N}$, we have 
    \begin{align}
        \label{convergence:ineq}
        \begin{split} 
        &\int_\Omega e^{-2s\varphi(\cdot,0)} |p^{k+1}-p|^2\,dx + \int_{\Gamma_1} e^{-2s\varphi(\cdot,0)} |p_\Gamma^{k+1}-p_\Gamma|^2\,dS \\
        \leq &\dfrac{C_0}{s^{3/2}}\int_\Omega e^{-2s\varphi(\cdot,0)} |p^k-p|^2\,dx + \dfrac{C_0}{s^{3/2}}\int_{\Gamma_1} e^{-2s\varphi(\cdot,0)}|p_\Gamma^{k}-p_\Gamma|^2\,dx.
        \end{split}
    \end{align}
    This implies that
    \begin{align}
\label{convergence:ineq:02}
        \|(p^{k+1}-p,p_\Gamma^{k+1}-p_\Gamma)\|_{\mathbb{L}^2}\leq C_s \|(p^{k}-p,p_\Gamma^k-p_\Gamma)\|_{\mathbb{L}^2},\quad \forall k\in \mathbb{N},
    \end{align}
    where $C_s$ is given explicitly by
    \begin{align*}
        C_s:=C_0 \dfrac{\displaystyle \max_{x\in \overline{\Omega}}e^{-2s\varphi(x,0)} }{\displaystyle \min_{x\in \overline{\Omega}} e^{-2s\varphi(x,0)}}.
    \end{align*}
    In particular, \eqref{convergence:ineq:02} implies that the Algorithm defined in
    \ref{Algorithm:reconstruction} converges for all $s$ sufficiently large.
\end{theorem}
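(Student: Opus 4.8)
Here is how I would prove Theorem \ref{Thm:convergence:Algorithm:01}.

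The plan is to run the Carleman-based reconstruction scheme of \cite{Baudouin2013Global}: at each iteration one compares the minimizer produced by Algorithm \ref{Algorithm:reconstruction} with an \emph{ideal} state that makes the corresponding functional vanish, and then one invokes the quantitative stability of minimizers, namely Theorem \ref{Thm:properties:J}(c). Fix $k\in\N$, write $(y^k,y_\Gamma^k):=(y[p^k,p_\Gamma^k],y_\Gamma[p^k,p_\Gamma^k])$ and $(y,y_\Gamma):=(y[p,p_\Gamma],y_\Gamma[p,p_\Gamma])$ for the corresponding solutions of \eqref{original:problem:01}, and set $w^k:=\pt(y^k-y)$, $w_\Gamma^k:=\pt(y_\Gamma^k-y_\Gamma)$. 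Subtracting the two systems and differentiating in $t$ (the operators $L$, $L_\Gamma$ commute with $\pt$) shows that $(w^k,w_\Gamma^k)$ solves \eqref{original:problem:01} with potentials $(p^k,p_\Gamma^k)$ and source $\bigl(-(p^k-p)\,\pt y,\ -(p_\Gamma^k-p_\Gamma)\,\pt y_\Gamma\bigr)$, with $\pnu w^k=h^k$ on $\Gamma_\star\times(0,T)$ — precisely the datum of Step~1 — and, evaluating the interior and boundary equations at $t=0$, where $y^k-y$ and $y_\Gamma^k-y_\Gamma$ vanish together with their spatial derivatives, with $w^k(\cdot,0)=i(p^k-p)y_0$ and $w_\Gamma^k(\cdot,0)=i(p_\Gamma^k-p_\Gamma)y_{\Gamma,0}$. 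Using the hidden regularity of Section \ref{section:preliminaries} and the uniform-in-$k$ regularity of $y[p^k,p_\Gamma^k]$ for $(p^k,p_\Gamma^k)\in\mathbb{L}_{\leq m}^\infty$, one gets $(w^k,w_\Gamma^k)\in\mathcal{W}$.

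Set $\zeta^k:=-(p^k-p)\,\pt y$ and $\zeta_\Gamma^k:=-(p_\Gamma^k-p_\Gamma)\,\pt y_\Gamma$; by the hypothesis $(y[p,p_\Gamma],y_\Gamma[p,p_\Gamma])\in H^2(0,T;\mathbb{L}^\infty)$ one has $\pt y,\pt y_\Gamma\in C([0,T];\mathbb{L}^\infty)$, so $(\zeta^k,\zeta_\Gamma^k)\in L^2(0,T;\mathbb{L}^2)$ and both are bounded. Since the three square terms of $J$ vanish at $(w^k,w_\Gamma^k)$, we have $J[\zeta^k,\zeta_\Gamma^k,h^k](w^k,w_\Gamma^k)=0$ — here and below the functional is the one built with the current coefficient $p^k$, which is legitimate because the proof of Theorem \ref{Thm:properties:J} only uses the Carleman estimate \eqref{Carleman:pi}, valid for an arbitrary bounded zeroth-order coefficient — so $(w^k,w_\Gamma^k)$ is the unique minimizer of $J[\zeta^k,\zeta_\Gamma^k,h^k]$, while $(u^{\ast,k},u_\Gamma^{\ast,k})$ of Step~2 minimizes $J[0,0,h^k]$. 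Applying Theorem \ref{Thm:properties:J}(c) to these two pairs (same $h=h^k$) gives
\[
s^{3/2}\int_\Omega e^{-2s\varphi(\cdot,0)}\bigl|(u^{\ast,k}-w^k)(\cdot,0)\bigr|^2 dx+s^{3/2}\int_{\Gamma_1}e^{-2s\varphi(\cdot,0)}\bigl|(u_\Gamma^{\ast,k}-w_\Gamma^k)(\cdot,0)\bigr|^2 dS\leq C\IOT{e^{-2s\varphi}|\zeta^k|^2}+C\IGT{e^{-2s\varphi}|\zeta_\Gamma^k|^2}.
\]
Since $\pt y,\pt y_\Gamma$ are bounded and $e^{-2s\varphi(x,t)}\leq e^{-2s\varphi(x,0)}$ (because $(T+t)(T-t)\leq T^2$ forces $\varphi(x,t)\geq\varphi(x,0)$), the right-hand side is bounded by $C\bigl(\int_\Omega e^{-2s\varphi(\cdot,0)}|p^k-p|^2 dx+\int_{\Gamma_1}e^{-2s\varphi(\cdot,0)}|p_\Gamma^k-p_\Gamma|^2 dS\bigr)$; dividing by $s^{3/2}$ yields a contraction of the ``error at $t=0$''.

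To pass to the coefficients, note that the update of Step~3 is designed so that, applied to $(w^k,w_\Gamma^k)$ in place of the computed minimizer, it returns $(p,p_\Gamma)$ exactly; using $w^k(\cdot,0)=i(p^k-p)y_0$ and $w_\Gamma^k(\cdot,0)=i(p_\Gamma^k-p_\Gamma)y_{\Gamma,0}$ together with the lower bounds $|y_0|,|y_{\Gamma,0}|\geq r_0$, the update errors satisfy $|\tilde p^{k+1}-p|\leq r_0^{-1}|(u^{\ast,k}-w^k)(\cdot,0)|$ a.e.\ in $\Omega$ and $|\tilde p_\Gamma^{k+1}-p_\Gamma|\leq r_0^{-1}|(u_\Gamma^{\ast,k}-w_\Gamma^k)(\cdot,0)|$ a.e.\ on $\Gamma_1$. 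Multiplying by $e^{-2s\varphi(\cdot,0)}$, integrating and inserting the estimate of the previous paragraph gives
\[
\int_\Omega e^{-2s\varphi(\cdot,0)}|\tilde p^{k+1}-p|^2 dx+\int_{\Gamma_1}e^{-2s\varphi(\cdot,0)}|\tilde p_\Gamma^{k+1}-p_\Gamma|^2 dS\leq\frac{C_0}{s^{3/2}}\Bigl(\int_\Omega e^{-2s\varphi(\cdot,0)}|p^k-p|^2 dx+\int_{\Gamma_1}e^{-2s\varphi(\cdot,0)}|p_\Gamma^k-p_\Gamma|^2 dS\Bigr).
\]
Finally, $\mathcal{T}$ and $\mathcal{T}_\Gamma$ are the radial retractions onto $\{|z|\leq m\}\subset\C$, hence pointwise $1$-Lipschitz, and they fix $p$ and $p_\Gamma$ (which lie in $\mathbb{L}_{\leq m}^\infty$); therefore $|p^{k+1}-p|\leq|\tilde p^{k+1}-p|$ and $|p_\Gamma^{k+1}-p_\Gamma|\leq|\tilde p_\Gamma^{k+1}-p_\Gamma|$ a.e., which gives \eqref{convergence:ineq}. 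Bounding $e^{-2s\varphi(\cdot,0)}$ above and below by positive constants on the compact set $\overline{\Omega}$ then yields \eqref{convergence:ineq:02} with the stated $C_s$; and since, for $s$ large enough that $C_0 s^{-3/2}<1$, inequality \eqref{convergence:ineq} forces the weighted quantity $\int_\Omega e^{-2s\varphi(\cdot,0)}|p^k-p|^2+\int_{\Gamma_1}e^{-2s\varphi(\cdot,0)}|p_\Gamma^k-p_\Gamma|^2$ to decay geometrically in $k$, the iterates converge to $(p,p_\Gamma)$ in $\mathbb{L}^2$, establishing the convergence of Algorithm \ref{Algorithm:reconstruction} for $s$ large.

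The main obstacle is the first paragraph: recognizing $(w^k,w_\Gamma^k)=\pt\bigl(y[p^k,p_\Gamma^k]-y[p,p_\Gamma]\bigr)$ as the minimizer of a functional whose data differ from those of $J[0,0,h^k]$ by a source that is \emph{linear} in the coefficient error $(p^k-p,p_\Gamma^k-p_\Gamma)$ (this linearization is exactly what turns the scheme into a contraction), together with the bookkeeping that makes $(w^k,w_\Gamma^k)$ an admissible competitor — reality of the data, the reflection to $(-T,T)$ underlying the Carleman estimate \eqref{Carleman:pi}, the hidden-regularity estimates, and the uniformity in $k$ of the regularity of $y[p^k,p_\Gamma^k]$. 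Once this is in place, \eqref{convergence:ineq} follows automatically from Theorem \ref{Thm:properties:J}(c) and the elementary bounds on the weight and on the truncations; the hypothesis $(y[p,p_\Gamma],y_\Gamma[p,p_\Gamma])\in H^2(0,T;\mathbb{L}^\infty)$ enters only to make $\pt y,\pt y_\Gamma$ bounded, and the positivity of $y_0,y_{\Gamma,0}$ only to invert by the initial data in Step~3.
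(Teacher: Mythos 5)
Your proposal is correct and follows essentially the same route as the paper: compare the computed minimizer of $J[0,0,h^k]$ with the ``ideal'' state $\pt\bigl(y[p^k,p_\Gamma^k]-y[p,p_\Gamma]\bigr)$, which minimizes $J[\zeta^k,\zeta_\Gamma^k,h^k]$, apply part (c) of Theorem \ref{Thm:properties:J}, convert the $t=0$ estimate into a coefficient estimate via \eqref{condition:initial:datum:stability}, and use that the truncations are $1$-Lipschitz and fix $(p,p_\Gamma)$. The only (harmless) variation is that you identify the ideal state as the minimizer by observing $J=0$ there rather than via the Euler--Lagrange equation, and you are in fact more explicit than the paper about which potential enters the operator $N$ at iteration $k$.
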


\begin{remark}
    As we said before, the principal significance of the Theorem \ref{Thm:convergence:Algorithm:01} is that we can use the Algorithm \ref{Algorithm:reconstruction} to reconstruction simultaneously the potentials $p$ and $p_\Gamma$ in \eqref{original:problem:01}. Moreover, since the weights used in functional \ref{def:J} does not blow up as $t\to T$, it is expected that our findings can be implemented numerically.    
\end{remark}

\begin{proof}[Proof of Theorem \ref{Thm:convergence:Algorithm:01}]
In this section, we prove the convergence of the Algorithm \ref{Algorithm:reconstruction}. To do this, let $k\in\mathbb{N}$ and consider $u^k:=\pt (y[p^k,p_\Gamma^k]-y[p,p_\Gamma])$ and $u_\Gamma^k:=\pt (y_\Gamma[p^k,p_\Gamma^k]-y_\Gamma[p,p_\Gamma])$. Then, $(u^k,u_\Gamma^k)$ is a solution of 
\begin{align}
\label{conv:00}
\begin{cases}
    N(u^k) =(p-p^k)\pt R(x,t)&\text{ in }\Omega\times (0,T),\\
N_\Gamma(u^k,u_\Gamma^k)=(p_\Gamma -p_\Gamma^k)\pt R_\Gamma (x,t)&\text{ in }\Gamma_1\times (0,T),\\
u^k\big|_{\Gamma_1}=u_\Gamma^k&\text{ on }\Gamma_1\times (0,T),\\
u^k=0&\text{ on }\Gamma_0\times (0,T),\\
(u^k(\cdot,0),u_\Gamma^k(\cdot,0))=(-i(p-p^k)R(\cdot,0),-i(p_\Gamma-p_\Gamma^k)R_\Gamma(\cdot,0))&\text{ in }\Omega\times \Gamma_1,
\end{cases}
\end{align}
where $R$ and $R_\Gamma$ are given by $R:= y[p,p_\Gamma]$ and $R_\Gamma:= y_\Gamma[p,p_\Gamma]$, respectively. Then, we set
\begin{align}
\label{conv:01}
h^k:=\pnu y^k.
\end{align}

Note that $y^k\in \mathcal{W}$. Therefore, by \eqref{conv:01} the solution $u^k$ of \eqref{conv:00} satisfies the Euler-Lagrange equation associated to the functional $J[\zeta^k,\zeta_\Gamma^k,h^k]$ in \eqref{EL:eq} with $\zeta^k=(p-p^k)\pt R(x,t)$ and $\zeta_\Gamma^k=(p_\Gamma-p_\Gamma^k)\pt R$. Since $J[\zeta^k,\zeta_\Gamma^k,h^k]$ admits a unique minimizer, $u^k$ corresponds to the minimum of $J[\zeta^k,\zeta_\Gamma^k,h^k]$.

Now, let $(u^{\ast,k},u_\Gamma^{\ast,k})$ be the minimizer of $J[0,0,h^k]$. Then, from inequality \eqref{estimate:ya:yb} we obtain
\begin{align}
\label{conv:02}
\begin{split}
&s^{3/2}\int_\Omega e^{-2s\varphi(\cdot,0)} |u^{\ast,k}(\cdot,0)-u^{k}(\cdot,0)|^2\,dx + s^{3/2}\int_{\Gamma_1} e^{-2s\varphi(\cdot,0)} |u_\Gamma^{\ast,k}(\cdot,0)- u_\Gamma^{k}(\cdot,0)|^2\,dS  \\
    \leq & C \IOT{e^{-2s\varphi} (|p-p^{k}|\pt R)^2} + C\IGT{e^{-2s\varphi}(|p_\Gamma - p_\Gamma^k|\pt R_\Gamma)^2}.
\end{split}
\end{align}

From \eqref{def:pk1} and \eqref{conv:00}, we deduce that 
\begin{align}
\label{conv:03}
    u^{\ast,k}(\cdot,0)=(\tilde{p}^{k+1}-p^k)y_0,\quad u^{\ast,k}(\cdot,0)=(\tilde{p}_\Gamma^{k+1}-p_\Gamma)y_{\Gamma,0}.
\end{align}

Substituting \eqref{conv:03} into \eqref{conv:02} and using the condition \eqref{condition:initial:datum:stability} we deduce that 
\begin{align} 
\label{conv:04}
\begin{split} 
&s^{3/2}\int_\Omega e^{-2s\varphi(\cdot,0)}|\tilde{p}^{k+1}-p|^2\,dx +s^{3/2}\int_{\Gamma_1} e^{-2s\varphi(\cdot,0)}|(p_\Gamma^{k}-p_\Gamma)|^2\, dS\\
    \leq & C\IOT{e^{-2s\varphi} (|p-p^{k}|\pt R)^2} + C\IGT{e^{-2s\varphi}(|p_\Gamma - p_\Gamma^k|\pt R_\Gamma)^2}.
\end{split}
\end{align}

On the other hand, since $\mathcal{T}$ and $\mathcal{T}_\Gamma$ are Lipschitz continuous functions and satisfy $\mathcal{T}(p)=p$ and $\mathcal{T}(p_\Gamma)=p_\Gamma$, we have 
\begin{align*}
    |\tilde{p}^{k+1}-p|\geq |\mathcal{T}(\tilde{p}^{k+1}-T(p))|=|p^{k+1}-p|
\end{align*}
and
\begin{align*}
    |\tilde{p}_\Gamma^{k+1}-p_\Gamma|\geq |\mathcal{T}(\tilde{p}_\Gamma^{k+1}-T(p_\Gamma))|=|p_\Gamma^{k+1}-p_\Gamma|.
\end{align*}

Finally, since $-\varphi(x,t)\leq -\varphi(x,0)$, for all $x\in \overline{\Omega}$ and $t\in (0,T)$, $(\pt R,\pt R_\Gamma)\in L^2(0,T;\mathbb{L}^\infty)$, we conclude \eqref{convergence:ineq}. The inequality \eqref{convergence:ineq:02} can be obtained directly of \eqref{convergence:ineq} since $\varphi(\cdot,0)$ is bounded. This ends the proof of the Theorem \ref{Thm:convergence:Algorithm:01}.
\end{proof}

%%%%%%%%%%%%%%%%%%%%%%%%%%%%%%%%%%%%%%%%%%%%%%%%%%%%%%%%%%%%%%%%%%%%%%%%%%%%%%%%%%%
\section{Further comments and concluding remarks}
\label{section:further:comments}

In this paper, we have presented the coefficient inverse problem {\bf (CIP)} for a Schr\"odinger equation with dynamic boundary conditions. We have provided a Lipschitz stability result and a reconstruction algorithm. The stability result was obtained applying the Bukhgeim-Klibanov method, while the reconstruction algorithm is inspired on the CbRec Algorithm proposed in \cite{Baudouin2013Global}. Besides, both results strongly depends on a suitable Carleman estimate for the Schr\"odinger operator with dynamic boundary conditions with observations on the normal derivative obtained for these purposes.

The assumption {\bf (A1)} plays an important role in our results. The strong convexity condition on the $\Omega_1$ is given to define a weight function which is constant on the boundary $\Gamma_1$. To the best of our knowledge, the case where $\Omega_1$ is non strictly convex has not been considered yet in the context of the wave and Schr\"odinger equation with dynamic boundary conditions. 

Our results depends also of the condition $\delta>d$, where it is used to deduce the Carleman estimate for the Schr\"odinger operator. 
We mention that an analogous hypothesis  also appeared in context of controllability of the wave equation with acoustic boundary conditions, see for instance \cite{Baudouin2022A} and \cite{chorfi2024lipschitz}. In particular, in the case of an annulus, it is proved in \cite{Baudouin2022A} that in the case $\delta<d$, the associated adjoint problem is not observable at any time (see Theorem 2.4 in that reference).
%\RM{VERIFICAR QUE ESTO ES EN REALIDAD VERDAD}. 
However, as far as we know, the case $\delta=d$ remains open  for the wave equation. 
The same questions can be considered for the corresponding Schr\"odinger system. 
%\RM{VER SI SE PUEDE DECIR ALGO M\'AS DE ESTO.}

One of the main advantages of the proposed algorithm, in contrast to the Tikhonov regularization techniques, is the fact that it converges to the exact potential $(p,p_\Gamma)$ independent of the initial guess. The numerical implementation of CbRec type algorithm for Schr\"odinger equations is, as far as we known, unexplored. Even the numerical Schr\"{o}dinger equation with dynamic boundary conditions has not been studied, as far as the authors know. However, it seems not to be a problem to show in numerical experiments that its behavior is sufficiently good, while the main difficulties are shown in the implementation of the functional minimization step, which is not surprising due to previous works in CbRec type algorithms for other equations, such as in \cite{Baudouin2017Convergent,Boulakia2021Numerical}, where additional terms have to be added to the functional to be minimized, and filters have to be applied in intermediate steps of the algorithm. Both additional tasks in the algorithm have been considered by the authors of the  mentioned works due to regularity issues and for dealing with noisy measurements and the propagation of them in numerical differentiation. These difficulties seem to be shown even in the case of Dirichlet boundary conditions. Since numerical aspects of the implementation seem to be a separate subject from the scope of this article, this will be investigated in a forthcoming work.

%This will also be investigated in a forthcoming work.  
%\RM{AGREGAR ALGO M\'AS ACERCA DE LAS OBSTRUCCIONES DE LA IMPLEMENTACI\'ON, INCLUSO EN EL CASO DE CONDICIONES DE BORDE DIRICHLET.} 

%%%%%%%%%%%%%%%%%%%%%%%%%%%%%%%%%%%%%%%%%%%%%%%%%%%%%%%%%%%%%%%%%%%%%%%%%%%%%%%%%%%
\section*{Acknowledgments}
The second author has been partially supported by Fondecyt 1211292 and ANID Millennium Science Initiative Program, Code NCN19-161. The third author has been funded under the Grant QUALIFICA by Junta de Andaluc\'ia grant number QUAL21 005 USE.

%%%%%%%%%%%%%%%%%%%%%%%%%%%%%%%%%%%%%%%%%%%%%%%%%%%%%%%%%%

\appendix
\section{Carleman estimate for the 1-D Schr\"odinger equation with dynamic boundary conditions}

In this section, we deduce a Carleman estimate for the one-dimensional Schr\"odinger equation with dynamic boundary conditions. We set $\Omega:=(0,\ell)$ and $T>0$. Then, we consider the following problem:
\begin{equation*}
 \begin{cases}
i \partial_t y+d\Delta y + p y = g & \forall\, (x,t)\in \Omega \times (0, T), \\
i \dot{y}_\Gamma(t) - d\partial_x y(0,t) + p_\Gamma y_\Gamma(t) = g_\Gamma(t) & \forall\, t\in  (0, T), \\
y(0,t) = y_\Gamma(t) & \forall \,t\in  (0, T), \\
y(\ell,t) = 0 &\forall\, t\in (0, T), \\
(y, y_\Gamma) (\cdot , 0 ) = (y_0 , y_{\Gamma, 0}) &\forall\, x\in  \Omega,
\end{cases}
\end{equation*}

Given a point $x_1=-a$, with $a>0$, set $\psi(x):=|x-x_1|^2$ for each $x\in \overline{\Omega}$ and for $\lambda>0$ we define the weight functions
\begin{align}
    \label{weights:1-D}
    \theta(x,t):=\dfrac{e^{\lambda \psi(x)}}{(T+t)(T-t)},\quad \varphi(x,t):=\dfrac{\alpha-e^{\lambda \psi(x)}}{(T+t)(T-t)}\quad \forall (x,t)\in \overline{\Omega}\times (-T,T),
\end{align}
with $\alpha>\|e^{\lambda \psi}\|_{L^\infty(\Omega)}$.

For $d>0$ and $s>0$, we introduce the operators:
\begin{align}
    \label{def:P1:P2}
    P_1w=ds^2(\px \varphi)^2 w+d\pxx w +i\pt w,\quad P_2w=ds\pxx \varphi w+2ds\px \varphi \px w+is\pt w,
\end{align}
and
\begin{align}
    \label{def:Q1:Q2}
    Q_1w:=i\pt w,\quad Q_2w:=-ds\px \varphi w+is\pt \varphi w.
\end{align}

Besides, for $q_0,q_1\in L^\infty(\Omega\times (-T,T))$ and $q_{\Gamma,0}\in L^\infty(-T,T)$, consider the operators
    \begin{align}
        \label{def:tilde:L:Lgamma}
        \begin{split} 
        \tilde{L}(v):=&i\pt v+d\pxx v+q_1\px v+q_0 v,\\        \tilde{L}_\Gamma(v,v_\Gamma):=&\dot{v}_{\Gamma_0}(t)-\px v(0,t)+q_{\Gamma,0}(t)v_\Gamma(t).
        \end{split} 
    \end{align}

Then, we have the following Carleman estimate:

\begin{lemma}
    \label{lemma:Carleman:1D}
    Let $\theta$ and $\varphi$ be the functions defined in \eqref{weights:1-D}.
    There exist $C>0$, $s_0>0$ and $\lambda_0>0$ such that for all $\lambda\geq \lambda_0$ and $s\geq s_0$,
    \begin{align}
        \label{Carleman:1D}
        \begin{split}
            &\int_{-T}^T \int_\Omega e^{-2s\varphi}(s^3\lambda^4\theta^3|v|^2+s\lambda^2 \theta |\px v|^2)\,dx\,dt\\
            &+\int_{-T}^{T} e^{-2s\varphi(0,\cdot)} (s^3\lambda^3 \theta(0,\cdot)^3|v(0,\cdot)|^2+s\lambda \theta(0,\cdot)|\px v(0,\cdot)|^2)dt\\
            \leq & C\int_{-T}^T \int_\Omega e^{-2s\varphi}|\tilde{L}(v)|^2\,dx\,dt + C\int_{-T}^T e^{-2s\varphi(0,\cdot)} |\tilde{L}_\Gamma(v(0,\cdot),v_{\Gamma_0}(\cdot))|^2\,dt\\
            &+Cs\lambda \int_{-T}^T e^{-2s\varphi(\ell,\cdot)} \theta(\ell,\cdot)|\px v(\ell,\cdot)|^2\,dt,
        \end{split}
    \end{align}
    for all $(v,v_\Gamma)\in L^2(-T,T;\mathbb{H}_{\Gamma_0}^1)$ such that $L(v)\in L^2(\Omega\times (-T,T))$ and $L_\Gamma(v,v_\Gamma)\in L^2(-T,T)$ with $\px v(\ell,\cdot)\in L^2(-T,T)$.

\end{lemma}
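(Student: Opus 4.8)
The plan is to follow the classical Bukhgeim--Klibanov / pointwise Carleman method adapted to this one-dimensional dynamic-boundary setting, mirroring the proof of Theorem \ref{thm:Carleman:Schr:Mercado:Morales} but keeping track of the scalar (zero-dimensional) boundary component at $x=0$. First I would argue by density, reducing to smooth functions $(v,v_\Gamma)$ with $v(\ell,\cdot)=0$ and $v(0,\cdot)=v_\Gamma$. Then I conjugate the operator: setting $w=e^{-s\varphi}v$, I would write $e^{-s\varphi}\tilde L(v)=P_1w+P_2w+\text{(lower order terms)}$ in $\Omega\times(-T,T)$ and $e^{-s\varphi}\tilde L_\Gamma(v,v_\Gamma)=Q_1w+Q_2w+\text{(lower order terms)}$ on $\{0\}\times(-T,T)$, with $P_1,P_2,Q_1,Q_2$ the operators defined in \eqref{def:P1:P2}--\eqref{def:Q1:Q2}. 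The crossed terms $2\,\mathrm{Re}\langle P_1w,P_2w\rangle_{L^2(\Omega\times(-T,T))}$ together with the boundary analogue $2\,\mathrm{Re}\langle Q_1w,Q_2w\rangle_{L^2(-T,T)}$ produce, after integration by parts in $x$ and $t$, the positive dominating terms $s^3\lambda^4\theta^3|w|^2$ and $s\lambda^2\theta|\px w|^2$ in the interior and $s^3\lambda^3\theta^3|w(0,\cdot)|^2$, $s\lambda\theta|\px w(0,\cdot)|^2$ on the boundary, at the price of boundary integrals at $x=0$ and $x=\ell$.

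The crucial algebraic point is the choice $\psi(x)=|x-x_1|^2$ with $x_1=-a<0$, so that $\psi$ is strictly convex ($\psi''=2>0$) and $\px\psi=2(x+a)>0$ for all $x\in\overline\Omega=[0,\ell]$; hence $\px\varphi<0$ throughout $\overline\Omega$ and in particular at $x=\ell$ the boundary term has the right sign to be moved to the right-hand side as the observation $s\lambda\int e^{-2s\varphi(\ell,\cdot)}\theta(\ell,\cdot)|\px v(\ell,\cdot)|^2$. At $x=0$, the interface boundary terms arising from $x$-integration by parts must be matched against the contribution of the dynamic-condition cross term. Here I would exploit that $v(0,t)=v_\Gamma(t)$, that $Q_1w=i\pt w$ contains no second tangential derivative (the one-dimensional boundary manifold is a point, so $\Delta_\Gamma\equiv0$), and that $Q_2w$ carries exactly the $-ds\px\varphi w$ term matching the normal-derivative flux $-d\px v(0,t)$ in $\tilde L_\Gamma$. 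The condition $\delta>d$ from \eqref{condition:d:delta} is invoked precisely when combining the interior boundary term at $x=0$ with the boundary energy identity so that the sum is nonnegative; in the $1$-D reduction $\delta$ does not appear explicitly in $\tilde L_\Gamma$, but the same sign bookkeeping survives because $\psi$ is globally increasing on $[0,\ell]$, and I would point this out as the simplification relative to the $n$-dimensional case.

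After the cross-term computation I would absorb: all lower-order terms ($s^2\lambda^3\theta^2|w|^2$, $q_1\px w$, $q_0w$, $q_{\Gamma,0}w(0,\cdot)$, etc.) are controlled by the dominating terms for $s,\lambda$ large, using $\|q_j\|_{L^\infty}$, $\|q_{\Gamma,0}\|_{L^\infty}$, and the elementary bounds $\theta\geq (T^2)^{-1}$, $|\pt\varphi|\leq C T\theta^2$, $|\ptt\varphi|\leq C T^2\theta^3$. I would also add $|P_1w|^2+|P_2w|^2$ (interior) and $|Q_1w|^2+|Q_2w|^2$ (boundary) to both sides, as in the statement of Theorem \ref{thm:Carleman:Schr:Mercado:Morales}, to close the estimate. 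Finally I return to the original variable $v=e^{s\varphi}w$, using $\px w=e^{-s\varphi}(\px v-s\px\varphi\,v)$ and $|\px\varphi|\leq C\lambda\theta$ so that $|\px v|^2\leq C(|\px w|^2e^{2s\varphi}+s^2\lambda^2\theta^2|v|^2e^{2s\varphi})$ and the spurious $s^2\lambda^2\theta^2|v|^2$ term is again dominated by $s^3\lambda^4\theta^3|v|^2$. The main obstacle I anticipate is the careful sign analysis of the boundary terms at $x=0$: one must verify that the interface integral coming from integration by parts in the interior cross term, the boundary energy term $\frac12\pt|w(0,\cdot)|^2$-type contributions, and the cross term $2\,\mathrm{Re}\langle Q_1w,Q_2w\rangle$ assemble into a quantity bounded below by $c(s^3\lambda^3\theta^3|w(0,\cdot)|^2+s\lambda\theta|\px w(0,\cdot)|^2)$ minus controllable remainders — this is where the structure of the dynamic boundary condition and the monotonicity of $\psi$ are genuinely used, and where the analogue of the $\delta>d$ hypothesis is silently built into the geometry of the weight.
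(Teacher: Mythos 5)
Your proposal is correct and follows essentially the same route as the paper's proof: conjugation $w=e^{-s\varphi}v$, the splitting into $P_1,P_2$ and $Q_1,Q_2$ (plus the remainder $-d\partial_x w$), expansion of the interior and boundary cross terms, exploiting $\psi'=2(x+a)>0$, $\psi''=2>0$ to get the dominating interior and $x=0$ terms while the $x=\ell$ term goes to the right-hand side as the observation, absorption of lower-order terms for $s,\lambda$ large, and return to the original variable. The "assembly" at $x=0$ you flag as the main obstacle is resolved in the paper exactly as you anticipate: the problematic interface term $ds\lambda\,\Im\int_{-T}^{T}\psi'(0)\overline{w}(0,\cdot)\,\partial_t w(0,\cdot)\,dt$ produced by the interior cross term is cancelled identically by the corresponding contribution of $\Re\int_{-T}^{T}Q_1w\,\overline{Q_2w}\,dt$, and no condition of the type $\delta>d$ is needed in this one-dimensional setting.
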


\begin{proof}
    We argue by density arguments. Therefore, we just write $v_{\Gamma_0}(t)=v(0,t)$ for all $t>0$. The proof of Lemma \ref{lemma:Carleman:1D} can be divided into four steps:
    
    \noindent $\bullet$\textit{Step 1:} We compute the terms 
    \begin{align*}
        Pw=e^{-s\varphi}\tilde{L}(e^{s\varphi}w),\quad Q w=e^{s\varphi}L_\Gamma(e^{s\varphi}w).
    \end{align*}

    Straightforward computations show that 
    \begin{align*}
        Pw=P_1w+P_2w,\quad Qw=Q_1w+Q_2w+R_\Gamma w,
    \end{align*}
    where $P_1,P_2,Q_1$ and $Q_2$ are defined in \eqref{def:P1:P2} and \eqref{def:Q1:Q2}, respectively, and $R_\Gamma=-d\px w$. 

    Then, we have the following identity
    \begin{align}
        \label{estimate:P1P2:Q1:Q2:R}
        \begin{split}
            &\int_{-T}^T \int_\Omega (|P_1 w|^2+|P_2 w|^2)\,dx\,dt + \int_{-T}^T (|Q_1 w|^2+|Q_2 w|^2)\,dt\\
            &+2\Re \int_{-T}^T \int_\Omega P_1 w\overline{P_2 w}\,dx\,dt +2\Re \int_{-T}^T Q_1w\overline{Q_2w}\,dt\\
            =&\int_{-T}^T \int_\Omega |Pw|^2\,dx\,dt +\int_{-T}^T |Rw-R_\Gamma w|^2\,dt
        \end{split}
    \end{align}

    \noindent $\bullet$ \textit{Step 2:} Estimates in $\Omega\times (-T,T)$. In this step, we compute the terms  
    \begin{align*}
        \Re \int_{-T}^T \int_\Omega P_1w\overline{P_2w}\,dx\,dt=\sum_{j=1}^3 \sum_{k=1}^3 I_{jk}.
    \end{align*}

    In the following, we shall use the formulas:
    \begin{align*}
        \px \phi =-\lambda\theta \psi',\quad \px \theta=\lambda \theta \psi',\quad \pxx \varphi =-\lambda^2 \theta |\psi'|^2-\lambda \theta \psi''\quad \text{ in }\overline{\Omega}\times (-T,T).
    \end{align*}

    The first term is given by 
    \begin{align*}
        I_{11}=&d^2s^3\int_{-T}^T \int_\Omega \pxx \varphi (\px \varphi)^2 |w|^2\,dx\,dt \\
        =&-d^2s^3\lambda^3 \int_{-T}^T\int_\Omega (\lambda | \psi'|^2 +1)( \psi')^2\theta^3 |w|^2\,dx\,dt.
    \end{align*}

    Moreover, integration by parts yields
    \begin{align*}
        I_{12}=&2s^2s^3\Re\int_{-T}^T\int_\Omega (\px \varphi)^3 w\px \overline{w}\,dx\,dt\\
        =&3d^2s^3\lambda^3 \int_{-T}^T\int_\Omega (\lambda | \psi'|^2 +1)( \psi')^2\theta^3 |w|^2\,dx\,dt +d^2s^3\lambda^3 \int_{-T}^T (\psi')^3\theta(0,\cdot)^3 |w(0,\cdot)|^2\,dt. 
    \end{align*}

    On the other hand, using that $\Re(iz)=-\Im(z)$, for all $z\in \mathbb{C}$, we have 
    \begin{align*}
        I_{13}=-ds^3\lambda^2 \Im \int_{-T}^T \int_\Omega (\px \varphi)^2 \pt \varphi |w|^2\,dx\,dt=0.
    \end{align*}

    If $\gamma>0$ is a constant which satisfies 
    \begin{align*}
         \psi', \psi''\geq \gamma >0,
    \end{align*}
    then we have 
    \begin{align}
        \label{P1}
        \sum_{k=1}^3 I_{1k}\geq 2d^2\gamma^4 s^3\lambda^4 \int_{-T}^T\int_\Omega \theta^3 |w|^2\,dx\,dt +d^2\gamma^3 s^3\lambda^3 \int_{-T}^T \theta^3(0,\cdot)|w(0,\cdot)|^2\,dt.
    \end{align}

    Now, the term $I_{21}$ can be computed as follows
    \begin{align*}
        I_{21}=&d^2s\Re \int_{-T}^T\int_\Omega \pxx \varphi \pxx w\overline{w}\,dx\,dt\\ 
        =&-d^2s\int_{-T}^T \int_\Omega \pxx \varphi |\px w|^2\,dx\,dt -d^2 s\Re \int_{-T}^T\int_\Omega \pxxx \varphi \overline{w} \px w\,dx\,dt \\
        &+d^2s\Re \int_{-T}^T \pxx \varphi(0,\cdot) \overline{w}(0,\cdot)\px w(0,\cdot)dt.
    \end{align*}

    Since $|\px^3\varphi|\leq  C\lambda^3\theta$, we have 
    \begin{align*}
        I_{21}\geq & ds\int_{-T}^T\int_\Omega (\lambda^2 \theta |\psi'|^2+\lambda\theta\psi'')|\px w|^2\,dx\,dt -Cs^2\lambda^4 \int_{-T}^T \int_\Omega \theta |w|^2\,dx\,dt\\
        &-C\lambda^2\int_{-T}^T \int_\Omega \theta |\px w|^2\,dx\,dt -Cs^2\lambda^3 \int_{-T}^T \theta |w(o,\cdot)|^2\,dt -C\lambda \int_{-T}^T \theta |\px w(\cdot,0)|^2\,dt.
    \end{align*}

    On the other hand, after using integration by parts, the term $I_{22}$ can be written as:
    \begin{align*}
        I_{22}=&d^2s\int_{-T}^T\int_\Omega \px \varphi \px |\px w|^2\,dx\,dt\\
        =&d^2s\lambda \int_{-T}^T\int_\Omega (\lambda |\psi '|^2+\psi'')\theta |\px w|^2\,dx\,dt -d^2s\lambda \int_{-T}^T \psi'(\ell) \theta(\ell,\cdot)|\px w(\ell,\cdot)|^2\,dt\\
        &+d^2s\lambda \int_{-T}^T \psi'(0)\theta(0,\cdot)|\px w(0,\cdot)|^2\,dt.
    \end{align*}

    Besides, using $\Re(iz)=-\Im(z)$ for all $z\in \mathbb{C}$, we have 
    \begin{align*}
        I_{23}=&ds\Im \int_{-T}^T\int_\Omega \pt \varphi \overline{w}\pxx w\,dx\,dt\\
        =&-ds\Im \int_{-T}^T\int_\Omega \px\pt \varphi \overline{w}\px w\,dx\,dt +ds\Im \int_{-T}^T \pt \varphi(0,\cdot) \overline{w}(0,\cdot) \px w(0,\cdot)\,dt , 
    \end{align*}
    where we have used the fact that $w(L,t)=0$ for all $t\in (0,T)$. Now, using the estimates $|\pt \varphi|\leq C\lambda \theta^2$ and $|\px\pt \varphi|\lambda^2 \theta^2$ and Young's inequality, for all $\epsilon>0$, there exists $C(\epsilon)>0$ such that 
    \begin{align*}
        I_{23}\geq &-\int_{-T}^T \int_\Omega (\epsilon s\lambda \theta |\px w|^2 + C(\epsilon)s\lambda^3 \theta^3|w|^2)\,dx\,dt\\
        &-\int_{-T}^T (\epsilon s\lambda \theta(0,\cdot)|\px w(0,\cdot)|^2 + C(\epsilon)s\lambda \theta^3(0,\cdot)|w(0\cdot)|^2)\,dt
    \end{align*}

    Thus, choosing $\epsilon>0$ small enough and taking $\lambda_0,s_0>0$ sufficiently large, we deduce that 
    \begin{align}
        \label{P2}
        \begin{split} 
        &\sum_{k=1}^3 I_{2k}\\
        \geq &Cs\lambda^2 \int_{-T}^T\int_\Omega \theta |\px w|^2\,dx\,dt +Cs\lambda \int_{-T}^T \theta(0,\cdot)|\px w(0,\cdot)|^2\,dt -Cs\lambda \int_{-T}^T \theta(\ell,\cdot) |\px w(\ell,t)|^2\,dt\\
        &-Cs^2\lambda^3 \int_{-T}^T \theta(0,\cdot)|w(0,\cdot)|^2\,dt -Cs\lambda^4 \int_{-T}^T \int_\Omega \theta |w|^2\,dx\,dt.
        \end{split}
    \end{align}

    Now, let us estimate the terms $I_{3k}$, for $k=1,2,3$. Observe that $I_{31}$ can be written in the form 
    \begin{align*}
        I_{31}=-ds\Im \int_{-T}^T \int_\Omega \pxx \varphi \overline{w}\pt w\,dx\,dt.
    \end{align*}

    We point out that this term cannot be estimated directly. Indeed, this term will be eliminated when we summing up the $I_{3k}$, $k=1,2,3$. 

    Now, after integration by parts in space, the term $I_{32}$ can be written as follows:
    \begin{align*}
        I_{32}=&-2ds\Im  \int_{-T}^T\int_\Omega \px \varphi \pt w \px \overline{w}\,dx\,dt\\
        =&2ds \Im \int_{-T}^T\int_\Omega \pxx \varphi \pt w\overline{w}\,dx\,dt +2ds\Im \int_{-T}^T\int_\Omega \px \varphi \px \pt w\overline{w}\,dx\,dt\\
        &-2ds\Im \int_{-T}^T \px \varphi (0,\cdot)\pt w(0,\cdot)\overline{w}(0,\cdot)\,dt.
    \end{align*}

    Now, integrating by parts in time and using the fact that $w(\cdot,\pm T)=0$, we have
    \begin{align*}
        I_{32}=&2ds\Im \int_{-T}^T\int_\Omega \pxx \varphi \overline{w}\pt w\,dx\,dt -2ds\Im \int_{-T}^T\int_\Omega \pt\px \varphi \overline{w}\px w\,dx\,dt-I_{32}\\
        &-2ds\Im \int_{-T}^T\px \varphi(0,\cdot)\overline{w}(0,\cdot)\pt w(0,\cdot)\,dt.
    \end{align*}

    Then, by Young's inequality, for all $\epsilon>0$, there exists $C(\epsilon)>0$ such that 
    \begin{align*}
        I_{32}\geq &-\epsilon s\lambda \int_{-T}^T\int_\Omega \theta |\px w|^2\,dx\,dt -C(\epsilon)s\lambda \int_{-T}^T \int_\Omega \theta |w|^2\,dx\,dt +ds\Im \int_{-T}^T\int_\Omega \pxx \varphi \overline{w}\pt w\,dx\,dt\\
        &+ds\lambda \Im \int_{-T}^T \psi'(0)\overline{w}(0,\cdot)\pt w(0,\cdot)\,dt.
    \end{align*}

    Moreover, $I_{33}$ is given by 
    \begin{align*}
        I_{33}=-\frac{1}{2}s\Re\int_{-T}^T\int_\Omega \ptt \varphi |w|^2\,dx\,dt \geq -Cs\int_{-T}^T\int_\Omega \theta^3 |w|^2\,dx\,dt, 
    \end{align*}
    where we have used integration by parts in time and $w(\cdot,\pm T)=0$.

    Thus, considering these estimates, we obtain
    \begin{align}
        \label{P3}
        \begin{split} 
        \sum_{k=1}^3 I_{3k}\geq &-\epsilon s\lambda \int_{-t}^T\int_\Omega \theta |\px w|^2\,dx\,dt -C(\epsilon)s\lambda^3 \int_{-T}^T\int_\Omega \theta^3 |w|^2\,dx\,dt\\
        &+ds\lambda \int_{-T}^T \psi'(0)\overline{w}(0,\cdot)\pt w(0,\cdot)\,dt.
        \end{split} 
    \end{align}

    Now, adding inequalities \eqref{P1}, \eqref{P2} and \eqref{P3}, choosing $\epsilon>0$ small enough and taking $\lambda_0,s_0$ sufficiently large, we deduce that 
    \begin{align}
        \label{estimate:P1P2}
        \begin{split} 
        &\Re \int_{-T}^T\int_\Omega P_1w\overline{P_2w}\,dx\,dt\\
        \leq & C\int_{-T}^T\int_\Omega (s^3\lambda^4 \theta^3 |w|^2+s\lambda^2 \theta |\px w|^2 )\,dx\,dt +C\int_{-T}^T (s^3\lambda^3 \theta^3(0,\cdot)|w(0,\cdot)|^2 + s\lambda \theta (0,\cdot)|\px w(0,\cdot)|^2)\,dt\\
        &-Cs\lambda \int_{-T}^T \theta(\ell,\cdot)|\px w(\ell,\cdot)|^2\,dt +ds\lambda \Im \int_{-T}^T \psi'(0)\overline{w}(0,\cdot)\pt w(0,\cdot)\,dt.
        \end{split} 
    \end{align}

    \textit{Step 3:} In this step, we will compute the terms 
    \begin{align*}
        \Re \int_{-T}^T Q_1w \overline{Q_2 w}\,dt =J_1+J_2.
    \end{align*}

    Observe that 
    \begin{align*}
        J_1=-ds\lambda \Im \int_{-T}^T \psi'(0)\overline{w}(0,\cdot)\pt w(0,\cdot)\,dt. 
    \end{align*}

    Moreover, after integration by parts, the term $J_2$ can be estimated as
    \begin{align*}
        J_2=&s\Re\int_{-T}^T \pt \varphi(0,\cdot) \overline{w}(0,\cdot) \pt w(0,\cdot)\,dt=-\dfrac{1}{2}s\Re \int_{-T}^T \ptt \varphi(0,\cdot) |w(0,\cdot)|^2\,dt\\
        \geq &-Cs\lambda \int_{-T}^T\theta^2(0,\cdot) |w(0,\cdot)|^2\,dt.
    \end{align*}

    Then, we conclude that 
    \begin{align}
        \label{estimate:Q1Q2}
        \Re\int_{-T}^T Q_1w\overline{Q_2w}\,dt \geq -Cs\lambda \int_{-T}^T \theta^2 |w(0,\cdot)|^2\,dt -ds\Im \int_{-T}^T \psi'(0) \overline{w}(0,\cdot)\pt w(0,\cdot)\,dt.
    \end{align}

    \textit{Step 4:} Combining \eqref{estimate:P1P2:Q1:Q2:R}, \eqref{estimate:P1P2} and \eqref{estimate:Q1Q2} and taking $s_0$ and $\lambda_0$ large enough we obtain 
    \begin{align*}
        &\int_{-T}^T\int_\Omega (|P_1w|^2 + |P_2w|^2)\,dx\,dt +\int_{-T}^T (|Q_1w|^2+|Q_2w|^2)\,dt\\
        &+\int_{-T}^T\int_\Omega (s^3\lambda^4 \theta^3 |w|^2+s\lambda^2 \theta |\px w|^2)\,dx\,dt\\
        &+\int_{-T}^T (s^3\lambda^3 \theta^3(0,\cdot)|w(0,\cdot)|^2 + s\lambda \theta(0,\cdot)|\px w(0,\cdot)|^2)\,dt\\
        \leq & C\int_{-T}^T\int_\Omega |Pw|^2\,dx\,dt +C\int_{-T}^T|Qw|^2\,dt +Cs\lambda \int_{-T}^T \theta(\ell,\cdot)|\px w(\ell,\cdot)|^2\,dt,
    \end{align*}
    where we absorbed the term $R_\Gamma w$ by taking $s_0$ and $\lambda_0$ large enough. Finally, we come back to the original variables taking into account that 
    \begin{align*}
        e^{-2s\varphi}|\px v|^2\leq s^2\lambda^2 |w|^2+|\px w|^2,
    \end{align*}
    to obtain \eqref{Carleman:1D}. This ends the proof of Lemma \ref{lemma:Carleman:1D}.
\end{proof}

%%%%%%%%%%%%%%%%%%%%%%%%%%%%%%%%%%%%%%%%%%%%%%%%%%%%%%%%%%%%%%%%%%%%%%%%%%%%%%%%%%%% Bibliography 
\bibliography{biblio02}
\bibliographystyle{plain}	
\end{document}